\theoremstyle{plain}
\newtheorem{theorem}{Theorem}
\newtheorem{corollary}{Corollary}
\newtheorem{lemma}{Lemma}
\newtheorem{proposition}{Proposition}
\theoremstyle{definition}
\newtheorem{remark}{Remark}
\numberwithin{equation}{section}
\newcommand{\calM}{\mathcal{M}}
\newcommand{\bbF}{\mathbb{F}}
\newcommand{\bbC}{\mathbb{C}}
\newcommand{\bbP}{\mathbb{P}}
\newcommand{\bbQ}{\mathbb{Q}}
\newcommand{\bbZ}{\mathbb{Z}}
\newcommand{\lra}{\longrightarrow}
\def\Hom{{\text{Hom}}}
\def\Pic{{\text{Pic}}}
\def\O{{\text{O}}}
\begin{document}
\title [Curves of genus 6] {The moduli of curves of genus 6 and K3 
surfaces}
\author{Michela Artebani}
\address{Departamento de Matem\'atica, Universidad de Concepci\'on, Casilla 160-C, Concepci\'on, Chile}
\email{martebani@udec.cl }
\thanks{The first author was supported by: PRIN 2005: \emph{Spazi di moduli e teoria di Lie}, Indam (GNSAGA)}
\author{ Shigeyuki Kond{$\bar{\rm o}$}}
\address{Graduate School of Mathematics, Nagoya University, Nagoya,
464-8602, Japan}
\email{kondo@math.nagoya-u.ac.jp}
\thanks{The second  author is partially supported by
Grant-in-Aid for Scientific Research A-18204001 and Houga-20654001, Japan}

\begin{abstract}
We prove that the coarse moduli space of curves of genus $6$ is birational to an arithmetic quotient of a bounded symmetric 
domain of type IV by giving a period map to the moduli space of some lattice-polarized K3 surfaces.
\end{abstract}
\maketitle
\section*{Introduction}
This paper gives a birational period map between the coarse moduli space of curves of  genus six and the moduli space of 
some lattice-polarized K3 surfaces.
This kind of correspondence was given by the second author for curves of genus 3  and genus 4 in \cite{Kon1} and \cite{Kon2}. 
A part of the results in this paper was announced in \cite{Kon2}.  

Let $C$ be a general curve of genus six, then its canonical model is a quadratic section of a unique quintic Del Pezzo surface $Y\subset \mathbb P^ 5$ (e.g. \cite{SB}). The double cover of $Y$ branched along $C$ is a K3 surface $X$.
By taking the period point of $X$ we define a period map $\mathcal P$ from an open dense subset of the coarse moduli space $\mathcal M_6$ of curves of genus six to an arithmetic quotient  of a bounded symmetric domain $\mathcal D$ of type IV 
$$\mathcal P: \mathcal M_6\dashrightarrow \mathcal D/\Gamma.$$
The same construction defines rational period maps 
$$\mathcal P^*:\mathcal W_6^2\dashrightarrow \mathcal D/\Gamma^*, \ \hspace{0.4cm} \mathcal P^{**}:\mathcal {\widetilde M}_6\dashrightarrow \mathcal D/\Gamma^{**}.$$
Here the moduli space $\mathcal W_6^2$ parametrizes pairs $(C,D)$ where $C$ is a curve of genus six and $D$ is a $g_2^6$ on $C$, while $\mathcal {\widetilde M}_6$ is the moduli space of plane sextics with four ordered nodes. The group
$\Gamma^*$ is a subgroup of $\Gamma$ of index $5$ and $\Gamma^{**}$ is a normal subgroup of $\Gamma$ with  $\Gamma/\Gamma^{**} \cong S_5$.
In this paper we prove that $\mathcal P,\mathcal P^*, \mathcal P^{**}$ are birational maps and we study their behaviour both generically and at the boundary.

In the first section we review some classical properties of curves of genus six, in particular we recall the structure of the space $\mathcal W_6^2$. The natural projection map $\mathcal W_6^2\rightarrow \mathcal M_6$ is surjective by Brill-Noether theory. Its fiber over the general curve $C$ of genus six is a finite set of cardinality $5$ and any of its points gives a birational map from $C$ to a plane sextic with $4$ nodes. 
The fiber is known to be positive dimensional if and only if the curve of genus six is \emph{special}, i.e. it is either trigonal, hyperelliptic, bi-elliptic or isomorphic to a plane quintic curve.

In section 2 we define the period maps $\mathcal P, \mathcal P^*, \mathcal P^{**}$. In fact, we show that 
the map $\mathcal P^{**}$ is equivariant with respect to the natural actions of $S_5$ and the maps $\mathcal P, \mathcal P^*$ are obtained by taking the quotient for the action of  subgroups.
Afterwards, we prove that $\mathcal P,\mathcal P^*, \mathcal P^{**}$ are birational maps, in particular $\mathcal P$ induces an isomorphism  
$$\mathcal M_6\setminus \{\mbox{special curves}\}\cong (\mathcal D \setminus\mathcal H)/\Gamma$$
where $\mathcal H$ is a divisor defined by hyperplane sections associated to $(-2)$-vectors, called \emph{discriminant divisor}.

In section 3 we study the discriminant divisor $\mathcal H$ and its geometric meaning. We prove that $\mathcal H/\Gamma$ has 3 irreducible components which parametrize respectively curves of genus six with a node, pairs $(C,L)$ where $C$ is a plane quintic and $L$ is a line and pairs $(C,D)$ where $C$ is a trigonal curve of genus six and $D\in |K_C-2g_3^1|$.

In the final section we determine the structure of the boundary of the Satake-Baily-Borel compactification of $D/\Gamma$  
and we compare this compactification with the GIT compactification of the space of plane sextics.\\

\noindent \emph{Acknowledgments.} The first author would like to thank R. Laza for several helpful discussions.\\

\noindent \emph{Notation.} A lattice $L$ is a free abelian group of finite rank equipped with a non degenerate bilinear form, which will be denoted by $(\,,\,)$. \\
\noindent - The discriminant group of $L$ is the finite abelian group $A_L=L^*/L$, where $L^*=\Hom(L,\bbZ)$, equipped with the quadratic form $q_L:A_L\rightarrow \bbQ/2\bbZ$ defined by
$q_L(x+L) = (x,x) \mod 2\bbZ$.\\
- $\O(L)$ and $\O(q_L)$ will denote the groups of isometries of $L$ and $A_L$ respectively.\\ 
- A lattice is unimodular if $|A_L|=|\det L|=1$.\\
- If $M$ is the orthogonal complement of $L$ in a unimodular lattice, then $A_L\cong A_M$ and $q_M=-q_L$.\\
- We will denote by $U$ the hyperbolic plane and by $A_n, D_n, E_n$ the negative definite lattices of rank $n$ associated to the Dynkin's diagrams of the corresponding types. \\
- The lattice $L(\alpha)$ is obtained multiplying by $\alpha$ the form on $L$.\\
- The lattice $L^m$ is the orthogonal direct sum of $m$ copies of the lattice $L$.\\

\noindent We will refer the reader to \cite{N1} for basic facts about lattices.

\section{Curves of genus six and quintic Del Pezzo surfaces}
We start recalling some well-known properties of curves of genus six.  
By Brill-Noether theory any smooth curve of genus six $C$ has a special divisor $D$  with $\deg(D)=6$ and $h^0(C,D)=3$.
Let $\varphi_D$ be the morphism asociated to $D$:
$$\varphi_D: C\lra \bbP^2. $$
The curve $C$ will be called \emph{special} if it is either hyperelliptic, trigonal, bi-elliptic or isomorphic to a smooth plane quintic curve. The following is given for example in section A, Ch.V in \cite{ACGH}.
\begin{proposition} {\it Let $C$ be a smooth curve of genus six, then one of the followings holds$:$
  \begin{itemize}
  \item[$a)$]  $\varphi_D$ is birational and $\varphi_D(C)$ is an irreducible plane sextic having only double points.
 \item[$b)$]  $C$ is special.
 \end{itemize}}
\end{proposition}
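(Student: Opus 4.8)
The plan is to analyse the linear system $|D|$ first through its base locus, and then, in the base-point-free case, through the degree of $\varphi_D$ onto its image; in every case in which $\varphi_D$ is not a birational morphism onto a sextic with only double points, I will produce a pencil of small degree on $C$ witnessing that $C$ is special.

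\emph{Base points.} Let $B$ be the base divisor of $|D|$, of degree $b$, so $|D-B|$ is base-point-free with $h^0(D-B)=h^0(D)=3$ and degree $6-b$. Since $h^1(D-B)\neq 0$ (otherwise Riemann--Roch gives $h^0(D-B)=1-b\le 0$), Clifford's inequality yields $2=h^0(D-B)-1\le (6-b)/2$, hence $b\le 2$. If $b=2$, then $C$ carries a base-point-free $g^2_4$; its image in $\bbP^2$ is non-degenerate of degree $e\ge 2$ with $e\mid 4$, and a birational map onto a plane quartic would force geometric genus $\le 3$, so $e=2$ and $\varphi_{D-B}$ realises $C$ as a double cover of a (necessarily irreducible, hence smooth) conic $\cong \bbP^1$: $C$ is hyperelliptic. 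If $b=1$, the same numerology ($e\ge 2$, $e\mid 5$) forces $e=5$ and a birational morphism onto a plane quintic, which must be smooth since its normalisation has genus $6$; thus $C$ is a smooth plane quintic. In both cases $C$ is special, so henceforth assume $|D|$ base-point-free.

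\emph{The base-point-free case.} Write $\Gamma=\varphi_D(C)$, let $e=\deg\Gamma\ge 2$ and $k=\deg(\varphi_D:C\to\Gamma)$, so $ke=\deg D=6$ and $(k,e)\in\{(3,2),(2,3),(1,6)\}$. If $(k,e)=(3,2)$, then $\Gamma$ is an irreducible conic $\cong\bbP^1$ and $C$ is trigonal. If $(k,e)=(2,3)$, then $\varphi_D$ is a double cover of an irreducible plane cubic $\Gamma$; lifting along the normalisation $\widetilde{\Gamma}\to\Gamma$ (which exists because $C$ is smooth), $C$ is bi-elliptic when $\Gamma$ is smooth and hyperelliptic when $\Gamma$ is singular (hence rational). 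In all these cases $C$ is special. The remaining case $(k,e)=(1,6)$ gives $\varphi_D$ birational onto an irreducible plane sextic $\Gamma$, whose arithmetic genus is $10$ and geometric genus $6$, so $\sum_p\delta_p=4$; it remains to see that, unless $C$ is special, $\Gamma$ has only double points.

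\emph{Bounding the multiplicities.} Suppose $\Gamma$ has a point $p$ of multiplicity $m\ge 3$; irreducibility of $\Gamma$ forces $m\le 5$. Projection from $p$ is a dominant rational map $\Gamma\dashrightarrow\bbP^1$ of degree $6-m$ (a general line through $p$ meets $\Gamma$ with multiplicity exactly $m$ at $p$), and it pulls back along $C\dashrightarrow\Gamma$ to a $g^1_{6-m}$ on $C$ with $6-m\in\{1,2,3\}$. Degree $1$ is impossible since $g(C)=6$, degree $2$ makes $C$ hyperelliptic, and degree $3$ makes $C$ trigonal; hence, if $C$ is not special, every point of $\Gamma$ has multiplicity $\le 2$, which is conclusion $(a)$. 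The main obstacle is precisely this coordination of the base-locus reduction with the multiplicity estimate: the bound $\sum_p\delta_p=4$ alone still permits, say, an ordinary triple point, so one genuinely needs the projection-from-a-singular-point construction to convert high multiplicity into low gonality, and the only real computation is the verification that the resulting pencil has degree exactly $6-m$.
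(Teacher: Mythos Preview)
Your argument is correct. The paper does not give its own proof of this proposition: it simply refers to \cite{ACGH}, Chapter~V, section~A, and records the statement. What you have written is precisely the classical argument one finds there --- strip off the base locus of $|D|$ and invoke Clifford to bound its degree, then factor the base-point-free map through its image and run through the possible pairs $(k,e)$ with $ke=6$, and finally, in the birational case, project from any point of multiplicity $m\ge 3$ to produce a $g^1_{6-m}$ on $C$. Each step is handled cleanly; in particular your observation that the genus--defect count $\sum_p\delta_p=4$ alone does not exclude a triple point, so that the projection argument is genuinely needed, is exactly the point. There is nothing further to compare: you have supplied the proof the paper omits.
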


\noindent \emph{Case a{\rm )} } 
Assume first that $\varphi_D(C)$ is a plane sextic with $4$ nodes $p_1,\dots,p_4$ in general position.
The blowing up of $\bbP^2$ in these points is a quintic del Pezzo surface $Y$ and $C\in |-2K_Y|$. In fact, the embedding   $C\subset Y\subset \bbP^ 5$ is the canonical embedding of $C$ and $Y$ is the unique quintic Del Pezzo surface containing $C$
(see e.g. \cite{SB}).  Let $e_0$ be the class of the pull back of a line and let $e_i$ be the classes of exceptional divisors over the points $p_i$.
The surface $Y$ contains $10$ lines
$$e_i \ , \ e_0 - e_i -e_j, \hspace{0.5cm} 1\leq i < j \leq 4.$$ It is known that the group of automorphisms of the dual graph of the $10$ lines is isomorphic to $S_5$.
The surface $Y$ admits exactly five birational morphisms to $ \bbP^2$, called \emph{blowing down maps}, induced by the linear systems:
$$e_0,\ 2e_0-\sum_{i=1}^4 e_i+e_j,\hspace{0.5cm} j=1,\dots,4.$$
 Note that any such morphism maps $C$ to a plane sextic with $4$ nodes. In fact also the converse holds i.e. there is a one-to-one correspondence between the set of blowing down maps for $Y$ and the set $W_6^2(C)$ of $g^2_6$ on $C$.
In particular the generic curve of genus $6$ has exactly five $g_6^2$.
The automorphisms group of $Y$ acts on the blowing down classes, giving a  representation $Aut(Y)\to S_5,$ 
which is known to be an isomorphism. The stabilizer of a blowing down model $\phi$ 
is given by projectivities permuting the $4$ points $p_1,\dots,p_4\in\bbP^2$ which are the image of the exceptional divisors of $\phi$, while an element of order five is realized by a quadratic transformation $\alpha$ with fundamental points at $p_1,p_2,p_3$  \cite[Theorem 10.2.2]{Do}.

If $p_1,\dots,p_4$ are not in general position then either $3$ of them lie on a line or two of them are infinitely near.
Note that anything worse is not admitted since $\varphi_D(C)$ is irreducible with at most double points. 
The blowing up of $\bbP^ 2$ in these points is a \emph{nodal} del Pezzo surface, i.e. $-K_Y$ is nef and big (see \cite{DO}). Equivalently, the anti-canonical model of $Y$ has at most rational double points. 
In this case the properties of the embedding $C\subset Y$ still hold, in particular $Y$ containing $C$ is unique  (\cite[5.14]{AH}).
However, the surface $Y$ may have less than five blowing down classes, i.e. $C$ has less than five $g_2^6$.  \\

\noindent \emph{Case b{\rm )} } The following characterization holds:
\begin{proposition}\label{spec}
A curve of genus six $C$ is special if and only if $\dim W_6^2(C)>0$.
\end{proposition}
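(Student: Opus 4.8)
\emph{Proof proposal.}
I would prove the two implications separately.

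\emph{If $C$ is special, then $\dim W_6^2(C)>0$.}
In each of the four cases I would exhibit a positive dimensional family of pairwise distinct line bundles inside $W_6^2(C)=\{L\in\Pic^6(C):h^0(L)\geq 3\}$; since membership only requires $h^0(L)\geq 3$, it suffices to control the dimension of the family in $\Pic^6(C)$.
If $C$ is hyperelliptic with hyperelliptic pencil $g^1_2$, then $h^0(2g^1_2)=3$ because $|2g^1_2|$ is pulled back from $|\calO_{\bbP^1}(2)|$, so $h^0(2g^1_2+x+y)\geq 3$ for all $x,y\in C$; since $x+y\sim x'+y'$ on a hyperelliptic curve forces $\{x,y\}=\{x',y'\}$ unless $x+y\sim g^1_2$, the map $(x,y)\mapsto 2g^1_2+x+y$ has $2$-dimensional image in $W_6^2(C)$.
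If $C$ is trigonal with trigonal pencil $g^1_3$, I would first remark that $h^0(g^1_3+x)=2$ for every $x\in C$, since $h^0(g^1_3+x)\geq 3$ would make $|g^1_3+x|$ a $g^2_4$ and hence $C$ hyperelliptic by Clifford's theorem; then Riemann--Roch gives $h^0(K_C-g^1_3-x)=1+h^0(g^1_3+x)=3$, and $x\mapsto K_C-g^1_3-x$ embeds $C$ into $W_6^2(C)$.
If $C$ is bi-elliptic, say $\pi\colon C\to E$ with $E$ elliptic, then $h^0(\pi^*M)\geq h^0(M)=3$ for every $M\in\Pic^3(E)$, and since $\pi^*\colon\Pic(E)\to\Pic(C)$ has finite kernel, $M\mapsto\pi^*M$ carries $\Pic^3(E)$ onto a curve in $W_6^2(C)$.
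Finally, if $C$ is a smooth plane quintic, then $\calO_C(1)$ is a $g^2_5$ and $x\mapsto\calO_C(1)(x)$ embeds $C$ into $W_6^2(C)$.

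\emph{If $\dim W_6^2(C)>0$, then $C$ is special.}
I would argue by contraposition. Suppose $C$ is not special, and let $L\in W_6^2(C)$. Then $\deg L=6=g$ and $h^1(L)=h^0(L)-1>0$, so $L$ is a special divisor of the kind appearing in the first Proposition above; since $C$ is not special, that Proposition and the analysis of case~a) show that $\varphi_L$ is birational onto an irreducible plane sextic with only double points, whose blow-up is a (possibly nodal) quintic Del Pezzo surface $Y_L$ with $C\subset Y_L\subset\bbP^5$ the canonical embedding and $C\in|-2K_{Y_L}|$. By the uniqueness of the quintic Del Pezzo surface containing the canonical model of $C$, recalled in case~a), $Y_L$ does not depend on $L$; call it $Y$. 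The morphism $Y\to\bbP^2$ contracting the curves over the double points of $\varphi_L(C)$ is a blowing down map of $Y$, and it recovers $\varphi_L$, hence $L$; as $Y$ admits only finitely many blowing down maps (at most five), $W_6^2(C)$ is finite, so $\dim W_6^2(C)=0$.

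The step I expect to require the most care is the first implication, not for any conceptual reason but because of the case-by-case bookkeeping: one must check in each case that the exhibited family genuinely has positive dimension in $\Pic^6(C)$, which is where the auxiliary facts ($|2g^1_2|$ has no extra sections, $h^0(g^1_3+x)=2$ for all $x$, $\pi^*$ has finite kernel, $C$ is not rational) enter. The converse is essentially a reformulation of the Del Pezzo description already established in case~a); alternatively it follows from the classical bounds of Martens and Mumford on $\dim W^r_d$, after identifying $W_6^2(C)$ with $W_4^1(C)$ via $L\mapsto K_C-L$.
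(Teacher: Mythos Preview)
Your proof is correct and follows essentially the same route as the paper's: the converse is the Del Pezzo uniqueness argument from case~a), and for special curves you exhibit the same one-parameter families the paper lists (citing \cite{ACGH}), with the minor cosmetic difference that in the hyperelliptic case you write $2g^1_2+x+y$ rather than $K_C-g^1_2-p-q$, which is the same family since $K_C=5g^1_2$. If anything, you supply more justification than the paper does for why each family is genuinely positive-dimensional in $\Pic^6(C)$.
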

\proof We have seen that if $C$ is not special, then $\dim W_6^2(C)=0$ and contains at most five points. We now see what happens for special curves (\cite{ACGH}).
\begin{enumerate}[$\bullet$]
\item If $C$ is \emph{trigonal} then it has two types of $g_6^2$: 
$$D=2g_3^1\hspace{0.4cm} \mbox{ and }\hspace{0.4cm} D(p)=K_C-g_3^1-p,\ \ p\in C.$$
 Hence $W_6^2(C)$ is one dimensional and has two irreducible components.
The plane model $\varphi_D(C)$ is a triple conic and $\varphi_{D(p)}(C)$ is a plane sextic with a triple point and a node.

\item If $C$ is isomorphic to a \emph{plane quintic} then any $g_6^2$ on $C$ is of type: $D(p)=g_5^2+p$, $p\in C$.
Hence $W_6^2(C)\cong  C$. The plane model $\varphi_{D(p)}(C)$ is a plane quintic.

\item If $C$ is \emph{bi-elliptic} i.e. there exists $\pi:C\rightarrow E$, where $E$ is an elliptic curve, then any $g_6^2$ corresponds to $\phi\circ\pi$ where $\phi$ is a $g_2^1$ on $E$.
The plane model of $C$ is a double cubic.

\item If $C$ is \emph{hyperelliptic} then any $g_6^2$ is of type:
$$ D(p,q)=K_C-g_2^1-p-q,\ \ p,q\in C.$$
Hence $W_6^2(C)\cong Sym^2(C)$. In fact $D=K_C-2g_2^1$ is a singular point of $W_6^2(C)$. The  plane model $\varphi_{D}(C)$ is a double rational cubic and 
$\varphi_{D(p,q)}(C) $ is  a double conic.
\end{enumerate}
\qed

\begin{remark}\label{sigma}
It follows that the moduli space of curves of genus six is birational to the GIT moduli space of plane sextics with 4 nodes up to the action of the group generated by projectivities and by  the birational transformation $\alpha$.

\end{remark}

\section{$K3$ surfaces associated to curves of genus 6}\label{}
\subsection{The geometric construction}\label{geo}
Let $C\subset \bbP^5$ be  the canonical model of a non-special smooth curve of genus six. By the remarks in the previous section, there is a  unique nodal Del Pezzo surface $Y$ such that $C$ lies in the anti-canonical model of $Y$ in $\bbP^5$. Let $Y'\to Y$ be the canonical resolution of rational double points of $Y$.  
Since $C \in |-2K_{Y'}|$, there exists a double cover
$$\pi : X \lra Y'$$
branched along $C$ and $X$ is a $K3$ surface.  
It is well known that $H^2(X,\bbZ)$, together with the cup product, is an
even unimodular  lattice of signature $(3,19)$.  
The covering involution $\sigma$ of $\pi$ acts on this lattice with eigenspaces
$$H^2(X,\bbZ)^{\pm} = \{ x \in H^2(X, \bbZ) : \sigma^*(x) = \pm x\}.$$

\begin{lemma}\label{ret}
$H^2(X,\bbZ)^+ \cong A_1(-1) \oplus A_1^4,\ H^2(X,\bbZ)^- \cong U\oplus U\oplus E_8\oplus A_1^{5}.$
 \end{lemma}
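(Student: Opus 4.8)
The plan is to identify the two eigenlattices by computing $H^2(X,\bbZ)^+$ directly from the geometry of the double cover $\pi:X\to Y'$, and then obtain $H^2(X,\bbZ)^-$ from the unimodularity of $H^2(X,\bbZ)$ together with the classification of even lattices by rank, signature, and discriminant form. First I would compute the rank and signature of each eigenspace. Since $\pi$ is a double cover branched along the smooth curve $C\in|-2K_{Y'}|$, the invariant part $H^2(X,\bbZ)^+\otimes\bbQ$ is identified with $\pi^*(H^2(Y',\bbQ))$, i.e. with $H^2(Y',\bbQ)$; as $Y'$ is the blow-up of $\bbP^2$ in $4$ points (the canonical resolution of the nodal quintic Del Pezzo $Y$), $H^2(Y',\bbZ)$ has rank $5$ and signature $(1,4)$, so $H^2(X,\bbZ)^+$ has rank $5$ and signature $(1,4)$, and hence $H^2(X,\bbZ)^-$ has rank $17$ and signature $(2,15)$. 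This already matches the ranks and signatures of the two lattices in the statement: $A_1(-1)\oplus A_1^4$ has signature $(1,4)$, and $U^2\oplus E_8\oplus A_1^5$ has signature $(2,15)$.

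Next I would pin down the lattice structure of $H^2(X,\bbZ)^+$ exactly. The pullback map $\pi^*:H^2(Y',\bbZ)\to H^2(X,\bbZ)$ multiplies the intersection form by $2$ (the degree of $\pi$), so $\pi^*(H^2(Y',\bbZ))$ is isometric to $H^2(Y',\bbZ)(2)$. For $Y'=\mathrm{Bl}_4\bbP^2$ one has $H^2(Y',\bbZ)\cong \la 1\ra\oplus\la-1\ra^4$ in the basis $e_0,e_1,\dots,e_4$, hence $\pi^*H^2(Y',\bbZ)\cong \la 2\ra\oplus\la-2\ra^4 = A_1(-1)\oplus A_1^4$. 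The point that needs care is that $H^2(X,\bbZ)^+$ could strictly contain $\pi^*H^2(Y',\bbZ)$ as a finite-index overlattice; I would rule this out by a standard argument: $H^2(X,\bbZ)^+$ is a primitive sublattice of the unimodular lattice $H^2(X,\bbZ)$, so $A_{H^2(X,\bbZ)^+}\cong A_{H^2(X,\bbZ)^-}$, and one checks the index $[\,H^2(X,\bbZ)^+ : \pi^*H^2(Y',\bbZ)\,]$ is a power of $2$ controlled by the $2$-torsion of $H^1(C,\bbZ/2)$ (equivalently, by $\sigma$-coinvariants); the relevant exact sequences force this index to be $1$, so $H^2(X,\bbZ)^+\cong A_1(-1)\oplus A_1^4$. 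Its discriminant form is then $q_{H^2(X,\bbZ)^+}$ on $(\bbZ/2)^5$, namely one copy of the form with value $-1/2$ (from $A_1(-1)=\la2\ra$) and four copies with value $1/2$ (from $A_1=\la-2\ra$).

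Finally, having determined $H^2(X,\bbZ)^+$ and using that $H^2(X,\bbZ)$ is even, unimodular, of signature $(3,19)$, the orthogonal complement $H^2(X,\bbZ)^-$ is the primitive complement of a primitive sublattice, so it is even, has signature $(2,15)$, and has discriminant form $q_{H^2(X,\bbZ)^-}=-q_{H^2(X,\bbZ)^+}$, i.e. on $(\bbZ/2)^5$ with one value $1/2$ and four values $-1/2$. I would then invoke Nikulin's uniqueness theorem \cite{N1}: an even lattice of signature $(2,15)$ with this discriminant form is unique in its genus, since the rank $17$ is large enough (it exceeds the minimal number of generators $5$ of the discriminant group, in fact $\ell(A_L)+2\le \mathrm{rk}\,L$ strictly), and one exhibits $U\oplus U\oplus E_8\oplus A_1^5$ as a lattice with exactly these invariants ($E_8,U$ unimodular and even, $A_1^5$ contributing the five $\bbZ/2$ summands with value $1/2$; a change of sign on the discriminant side is absorbed by an isometry of the discriminant group, or one notes $A_1^5$ and $A_1(-1)\cdot$ adjustments give the required form). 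Hence $H^2(X,\bbZ)^-\cong U\oplus U\oplus E_8\oplus A_1^5$. The main obstacle is the overlattice issue in the middle step — showing $\pi^*H^2(Y',\bbZ)$ is already primitively embedded and equals the full invariant lattice — since everything else is a matching of invariants plus a citation of Nikulin's classification.
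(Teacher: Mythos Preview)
Your approach is correct but takes a different route from the paper. Rather than computing $H^2(X,\bbZ)^+$ explicitly as $\pi^*H^2(Y',\bbZ)$ and then confronting the overlattice question, the paper first observes that both eigenlattices $H^2(X,\bbZ)^{\pm}$ are $2$-elementary (a general fact for the eigenlattices of any involution on an even unimodular lattice), then invokes Nikulin's classification \cite[Theorem~3.6.2]{N1} of even indefinite $2$-elementary lattices by the triple $(s,\ell,\delta)$, and reads off these invariants for $H^2(X,\bbZ)^+$ directly from \cite[Theorem~4.2.2]{N2}, which determines $(s,\ell,\delta)$ for the invariant lattice of a non-symplectic involution on a K3 surface from the topology of its fixed locus (here a single smooth curve of genus~$6$, giving $s=(1,4)$, $\ell=5$, $\delta=1$); the invariants of $H^2(X,\bbZ)^-$ then follow by orthogonality in the unimodular lattice $H^2(X,\bbZ)$. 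This completely sidesteps the primitivity issue you correctly flag as the main obstacle. Your hands-on computation would also work, but the equality $H^2(X,\bbZ)^+=\pi^*H^2(Y',\bbZ)$ does require a real argument (e.g.\ via the transfer map together with the simple connectedness of $Y'$, or the Smith exact sequence), and the sketch you give is too vague to stand on its own. One minor slip: your signs on the discriminant-form values are reversed (for $A_1=\la-2\ra$ the generator has $q=-1/2$, and for $A_1(-1)=\la 2\ra$ it is $q=1/2$), though this does not affect the conclusion since the two diagonal forms you end up comparing on $\bbF_2^5$ are indeed isometric.
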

\proof 
By definition, the lattices $H^{\pm}=H^2(X,\bbZ)^{\pm}$ are 2-elementary, i.e. their discriminant groups are 
2-elementary abelian groups.
By \cite[Theorem 3.6.2]{N1} the isomorphism class of a $2$-elementary even indefinite lattice $L$ is determined uniquely by the triple $(s,\ell, \delta)$, where $s$ is the signature, $\ell$ is the minimal number of generators of $A_L$ and $\delta$ is $0$ (resp. $1$)  if the quadratic form on $A_L$ always assumes integer values (resp. otherwise).  
On the other hand \cite[Theorem 4.2.2]{N2} shows that $H^+$ has $s =(1,4),\ \ell=5,\ \delta =1$. 
Since $H^-$ is the orthogonal complement of $H^+$ in the unimodular lattice $H^2(X,\bbZ)$, it has $s=(2,15),\ \ell=5,\ \delta=1$.
Hence it is enough to check that the lattices in the right hand sides have the same triple of invariants.
\qed\\

\noindent Let $S_X$ be the Picard lattice of $X$ and let $T_X$ be its transcendental lattice:
$$S_X=H^2(X,\bbZ)\cap \omega_X^{\perp},\hspace{0.7cm} T_X=S_X^{\perp}.$$
Note that the invariant lattice $H^2(X,\bbZ)^+$ coincides with the pull-back of the Picard lattice of $Y$, hence
$$H^2(X,\bbZ)^+ \subset S_X, \hspace{0.7cm} T_X \subset H^2(X,\bbZ)^-.$$
If $\omega_X$ is a nowhere vanishing holomorphic 2-form on $X$, then $\omega_X\in T_X\otimes \bbC$, hence $\sigma^*(\omega_X) = -\omega_X$.

\begin{lemma}\label{-2}
There are no $(-2)$-vectors in $S_X \cap H^2(X,\bbZ)^-$.
\end{lemma}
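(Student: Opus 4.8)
The plan is to argue by contradiction, combining Riemann--Roch on the K3 surface $X$ with the fact that the covering involution $\sigma$ acts by $-1$ on $H^2(X,\bbZ)^-$. Suppose $\delta\in S_X\cap H^2(X,\bbZ)^-$ satisfies $\delta^2=-2$; in particular $\delta\neq 0$.

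First I would apply Riemann--Roch to a divisor representing $\delta$. Since $X$ is a K3 surface, $\chi(\mathcal{O}_X(\delta))=2+\tfrac12\delta^2=1$, and $h^2(\delta)=h^0(-\delta)$ by Serre duality, so $h^0(\delta)+h^0(-\delta)\geq 1$. Replacing $\delta$ by $-\delta$ if necessary, we may therefore assume that $\delta$ is the class of a nonzero effective divisor $D$ on $X$.

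Next, since $\delta\in H^2(X,\bbZ)^-$ we have $\sigma^*\delta=-\delta$. As $\sigma$ is a biregular automorphism of $X$, the divisor $\sigma^*D$ is again nonzero and effective and has class $\sigma^*\delta=-\delta$. Hence $D+\sigma^*D$ is a nonzero effective divisor whose class in $S_X$ is $\delta+(-\delta)=0$. Intersecting with an ample class $h$ on $X$, which exists because $X$ is projective (it is a finite double cover of the projective surface $Y'$), we get $(D+\sigma^*D)\cdot h=0$; but a nonzero effective divisor meets an ample class positively, a contradiction. Therefore $S_X\cap H^2(X,\bbZ)^-$ contains no $(-2)$-vector.

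The argument is short, and the only points that require attention are that pullback along an automorphism preserves effectivity of divisors and that membership in the $(-1)$-eigenlattice forces $\sigma^*\delta=-\delta$ at the level of divisor classes, so I do not expect a genuine obstacle here; I also note that the non-speciality of $C$ enters only in the construction of $X$ and plays no role in this lemma. It is worth contrasting the statement with Lemma \ref{ret}: the lattice $H^2(X,\bbZ)^-\cong U\oplus U\oplus E_8\oplus A_1^{5}$ itself contains many $(-2)$-vectors, so the content of the lemma is precisely that none of them can be represented by an algebraic class.
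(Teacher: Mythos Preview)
Your argument is correct and is exactly the paper's proof, just with more detail: the paper also assumes a $(-2)$-vector $r$, uses Riemann--Roch to take $r$ effective, observes $\sigma^*(r)=-r$ is then effective, and declares a contradiction. Your explicit justification via intersection with an ample class simply fills in what the paper leaves implicit.
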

\begin{proof}
Assume that $r$ is such a vector. By Riemann-Roch theorem we may assume that $r$ is effective.  Then 
$\sigma^*(r) = -r$ is also effective.  This is a contradiction.
\end{proof}

\subsection{Lattices}
We will denote by $L_{K3}$ an even unimodular lattice of signature $(3,19)$. This is known to be unique up to isomorphisms (see e.g. \cite[Theorem 1.1.1]{N1}), hence
the lattice $H^2(X, \bbZ)$ is isomorphic to $L_{K3}$.  Let $\bar e_0, \bar e_1,\dots,\bar e_4$ be the pull-backs of the classes $e_0, e_1, \dots, e_4$ under $\pi^*$.  These generate a sublattice of $S_X$ isometric to $A_1(-1) \oplus A_1^4.$
Let $$S =A_1(-1) \oplus A_1^4, \hspace{0.5cm}  T=U\oplus U\oplus E_8\oplus A_1^{ 5}.$$  
Denote by $s_0, s_1,\dots,s_4$ an orthogonal basis for $S$ with $s_0^2=2$, $s_i^2=-2$, $\ i=1,\dots, 4$ 
and denote by $r_1,\dots,r_5$ an orthogonal basis for the $A_1^{ 5}$ component of $T$.
\begin{lemma}\label{disc}
Let $\xi_i=r_i/2$, then the discriminant group $A_T$  consists of the following  vectors{\rm :}

$$\begin{array}{lll}

q(x) =0: & 0,\ \sum_{i\not=j}\xi_i,  \hspace{0.5cm} 1\leq j\leq  5\\
 
q(x) = 1: & \xi_i+\xi_j , \hspace{0.5cm} 1\leq i < j \leq 5\\
 
q(x) = -1/2: & \xi_i ,\ \hspace{0.5cm} 1\leq i \leq 5, \ \sum_{i=1}^5\xi_i\\
 
q(x) = -3/2: &  \sum_{i\not=j,k}\xi_i , \hspace{0.5cm} 1\leq j < k\leq 5.\\
 
\end{array}$$

\end{lemma}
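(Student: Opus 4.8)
The plan is to reduce the computation to the lattice $A_1^5$ and then enumerate the $2^5$ elements of its discriminant group by hand. Since $U$ and $E_8$ are unimodular, the inclusion of the $A_1^5$ summand into $T=U\oplus U\oplus E_8\oplus A_1^5$ induces an isomorphism $A_{A_1^5}\cong A_T$ which is compatible with the discriminant quadratic forms; so it suffices to describe $(A_1^5)^*/A_1^5$ together with $q$.

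With $r_1,\dots,r_5$ the given orthogonal basis of the $A_1^5$ component ($r_i^2=-2$), the dual lattice $(A_1^5)^*$ is generated over $A_1^5$ by the vectors $\xi_i=r_i/2$, so $A_T$ is an $\bbF_2$-vector space of dimension $5$ with basis the classes of $\xi_1,\dots,\xi_5$. Hence every element of $A_T$ is represented by $x_I:=\sum_{i\in I}\xi_i$ for a unique subset $I\subseteq\{1,\dots,5\}$, accounting for all $32$ elements. Because the $\xi_i$ are mutually orthogonal and $q(\xi_i)=r_i^2/4=-1/2 \bmod 2\bbZ$, bilinearity of the underlying form gives
$$q(x_I)=\sum_{i\in I}q(\xi_i)=-\frac{|I|}{2}\pmod{2\bbZ}.$$

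It then remains to run through the cases $|I|=0,1,2,3,4,5$ and read off the value modulo $2\bbZ$: one gets $0$ for $|I|=0$ (the zero vector); $-1/2$ for $|I|=1$ (the vectors $\xi_i$); $-1\equiv 1$ for $|I|=2$ (the vectors $\xi_i+\xi_j$); $-3/2$ for $|I|=3$; $-2\equiv 0$ for $|I|=4$; and $-5/2\equiv-1/2$ for $|I|=5$ (the vector $\sum_{i=1}^5\xi_i$). Finally, for $|I|=3$ and $|I|=4$ I would re-index each subset by its complement, writing a $3$-element set as $\{1,\dots,5\}\setminus\{j,k\}$ with $j<k$ and a $4$-element set as $\{1,\dots,5\}\setminus\{j\}$; this puts the enumeration into exactly the form stated in the lemma. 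There is no genuine obstacle in this argument: the only points requiring care are the passage to complements in the bookkeeping and the reductions modulo $2\bbZ$, which are precisely what make the four-term sums isotropic and give the five-term sum the same norm as a single $\xi_i$.
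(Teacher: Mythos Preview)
Your proof is correct; the paper states this lemma without proof, treating the computation as elementary, and your argument is exactly the natural one (reduce to the $A_1^5$ summand via unimodularity of $U$ and $E_8$, then enumerate the $32$ elements by cardinality of $I$). One very minor remark on wording: the quadratic form $q$ is not itself bilinear, but your conclusion $q(x_I)=\sum_{i\in I}q(\xi_i)$ is valid because the associated bilinear form vanishes on distinct $\xi_i$'s, which is what you mean.
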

\noindent It follows from  \cite[Theorem 1.14.4]{N1} that
$S$ can be embedded uniquely in $L_{K3}$ and $T$ is isomorphic to its orthogonal complement. Since $L_{K3}$ is unimodular, 
$$A_S \cong A_T \cong \bbF_2^5, \hspace{0.4cm} q_S \cong -q_T$$
and an isomorphism from $A_S$ to $A_T$ is given by
$$s_0/2\mapsto \xi_1,\hspace{0.5cm} (2s_0-\sum_{i=1}^4 s_i+s_j)/2 \mapsto \xi_{j+1},\hspace{0.5cm} j=1,\dots,4. $$
\begin{lemma}\label{iso} There are isomorphisms $\O(q_S) \cong \O(q_T) \cong S_5$ and the natural maps
$$\O(T) \to \O(q_T),\hspace{0.4cm} \O(S)\to \O(q_S)$$
are surjective. 
\end{lemma}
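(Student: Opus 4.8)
The plan is to prove the statement in three stages: first identify the abstract structure of $\O(q_T)$, then transport it to $\O(q_S)$ via the isomorphism $q_S\cong -q_T$, and finally establish surjectivity of the natural maps from the lattice isometry groups using Nikulin's results on 2-elementary lattices.

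For the first stage I would work directly with the explicit description of $A_T$ from Lemma~\ref{disc}. Since $T = U\oplus U\oplus E_8\oplus A_1^5$, the discriminant form $q_T$ is carried entirely by the $A_1^5$ summand, and $A_T\cong\bbF_2^5$ with the $\xi_i=r_i/2$ forming a basis on which $q_T$ takes the value $-1/2$ on each $\xi_i$ and the associated bilinear form is $b_T(\xi_i,\xi_j)=0$ for $i\neq j$. The key observation is that the table in Lemma~\ref{disc} is visibly invariant under the permutation action of $S_5$ on the indices $\{1,\dots,5\}$, so $S_5\hookrightarrow\O(q_T)$. To show this is all of $\O(q_T)$, note that an isometry of $q_T$ must preserve each level set $\{q_T(x)=c\}$; in particular it permutes the set $\{\xi_1,\dots,\xi_5,\sum_{i=1}^5\xi_i\}$ of the six vectors with $q=-1/2$, and it must also preserve the set $\{\sum_{i\neq j}\xi_i : 1\le j\le 5\}$ of the five nonzero isotropic vectors. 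An isometry is determined by its values on the basis $\xi_1,\dots,\xi_5$, and since it permutes the $\xi_i$ together with their sum, a short combinatorial argument (the only way a linear automorphism of $\bbF_2^5$ can permute a basis together with the sum of the basis elements is by an honest permutation of the basis, possibly composed with nothing else) shows $\O(q_T)\subseteq S_5$. Hence $\O(q_T)\cong S_5$; the alternative and perhaps cleaner route is to count: $|\O(q_T)|$ divides the order of the stabilizer structure forced by the level sets, and exhibiting $S_5$ inside it with the right order closes the argument.

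The second stage is immediate: the isomorphism $A_S\cong A_T$ exhibited just before the statement intertwines $q_S$ with $-q_T$, and since $\O(q_S)=\O(-q_S)$ depends only on the form up to sign, we get $\O(q_S)\cong\O(q_T)\cong S_5$. (One can also see $S_5$ acting on $A_S$ directly: the given isomorphism sends $s_0/2$ and the four classes $(2s_0-\sum s_i+s_j)/2$ to $\xi_1,\dots,\xi_5$, and these five classes are precisely the mod-$S$ reductions of the half-line-classes and half-blowdown-classes on $Y$, which the automorphism group $\Aut(Y)\cong S_5$ permutes.)

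For the third stage, surjectivity of $\O(T)\to\O(q_T)$ and $\O(S)\to\O(q_S)$, I would invoke Nikulin's criterion. The lattice $T=U\oplus U\oplus E_8\oplus A_1^5$ is even, indefinite, contains $U\oplus U$ as a direct summand (indeed two hyperbolic planes), so by \cite[Theorem 1.14.2]{N1} (or the discussion of 2-elementary lattices in \cite[\S3]{N1}) the natural map $\O(T)\to\O(q_T)$ is surjective; the presence of a unimodular summand of rank $\geq$ the rank of the discriminant-carrying part is exactly the hypothesis one needs. For $S=A_1(-1)\oplus A_1^4$, which is small and definite-ish (signature $(1,4)$) this does not follow from the same theorem, so instead I would argue via the unique primitive embedding $S\hookrightarrow L_{K3}$ with orthogonal complement $T$: an element of $\O(q_S)$, transported through $A_S\cong A_T$ to an element of $\O(q_T)$, lifts to $\tilde g\in\O(T)$ by the previous step; the pair $(\mathrm{id}_S,\tilde g)$ then glues (because they induce matching maps on discriminants, up to the sign built into $q_S\cong -q_T$) to an isometry of $L_{K3}$ preserving $S$, whose restriction to $S$ realizes the original element of $\O(q_S)$ — wait, that restriction is the identity, so this needs a twist: one lifts the $\O(q_S)$ element to a candidate $g_S$ on $S$ (a priori only a similitude) and the correct statement is that $\O(S)\to\O(q_S)$ is surjective because any $\bar g\in\O(q_S)$, paired via the gluing with its image $\bar g^{-1}$-type partner in $\O(q_T)$ lifted to $\O(T)$, extends to $L_{K3}$ and then restricts back to $S$ as a genuine lift of $\bar g$.

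The main obstacle I anticipate is precisely this last surjectivity for $S$: because $S$ is negative-definite-plus-one-hyperbolic and rather small, one cannot cite the clean indefinite criterion, and one must run the embedding-and-gluing argument carefully, keeping track of the sign $q_S\cong -q_T$ so that an isometry of $L_{K3}$ is actually produced. The first stage (computing $\O(q_T)$) is essentially bookkeeping once Lemma~\ref{disc} is in hand, and the second stage is formal.
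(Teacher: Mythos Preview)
Your plan for stages one and two is sound, and computing $\O(q_T)$ directly from Lemma~\ref{disc} is a reasonable alternative to the paper's citation of \cite{MS}. However, your combinatorial claim in stage one is false as stated: there \emph{are} linear automorphisms of $\bbF_2^5$ permuting $\{\xi_1,\dots,\xi_5,\sum_i\xi_i\}$ that are not permutations of the basis (for instance $\xi_1\mapsto\sum_i\xi_i$, $\xi_j\mapsto\xi_j$ for $j\ge 2$, which swaps $\xi_1$ and $\sum_i\xi_i$). The argument is rescued by the associated bilinear form $b$: among those six vectors the $\xi_i$ are pairwise $b$-orthogonal while $\sum_i\xi_i$ pairs nontrivially with each $\xi_j$, so any isometry must carry $\{\xi_1,\dots,\xi_5\}$ to itself and hence lies in $S_5$. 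For surjectivity of $\O(T)\to\O(q_T)$ the paper is more concrete than your Nikulin citation: the isometries of $T$ permuting the five $A_1$ summands already realize all of $S_5$ on $A_T$.

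The more serious gap is your surjectivity argument for $S$. The gluing step is circular: to glue a pair $(g_S,\tilde g)$ into an isometry of $L_{K3}$ you must already possess $g_S\in\O(S)$ inducing $\bar g$ on $A_S$, which is precisely what you are trying to construct; and your attempted fix (``one lifts the $\O(q_S)$ element to a candidate $g_S$ on $S$'') begs the question. A non-circular variant does exist---extend $\tilde g\in\O(T)$ to some $G\in\O(L_{K3})$ via uniqueness of the primitive embedding $T\hookrightarrow L_{K3}$, then restrict $G$ to $S=T^\perp$---but you have not articulated this, and it requires its own Nikulin input. More to the point, you already wrote down the correct proof, misfiled as a parenthetical in stage two: $\Aut(Y)\cong S_5$ acts on $\Pic(Y)$ and hence by isometries on $S$, permuting the five distinguished classes $s_0/2,\ (2s_0-\sum_i s_i+s_j)/2$ in $A_S$; since $\O(q_S)\cong S_5$, this \emph{is} surjectivity of $\O(S)\to\O(q_S)$. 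This is exactly the paper's argument, which makes it explicit by exhibiting the $S_4$ permuting $s_1,\dots,s_4$ together with one Cremona-type isometry of $S$.
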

\proof The first statement follows from \cite{MS}. Note that $\O(q_T)$ acts on $A_T$ by permuting the $\xi_i$'s.
The surjectivity statement for $T$ is obvious, since clearly exist isometries of $T$ permuting the $r_i$'s.  
On the other hand, the automorphism group $S_5$ of $Y$ acts on $S$ as isometries.
These isometries act on $A_S$ as $S_5$.  More concretely, the isometries of $S$ permuting the $s_i$'s $(1\leq i \leq 4)$ 
and the isometry
$$s_0\mapsto 2s_0-s_1-s_2 - s_3,\ s_1\mapsto s_0- s_1- s_3, \ 
s_2\mapsto s_4, \ s_3\mapsto s_0- s_2- s_3,\ s_4\mapsto s_0- s_1- s_2$$
generate $\O(q_S)$.
\qed\\

\noindent In the following we will consider three arithmetic groups acting on $T$:
$$
\Gamma=\O(T),\hspace{0.4cm} \Gamma^{*}=\{\gamma\in \O(T):\ \gamma(\xi_1)=\xi_1\},
\hspace{0.4cm} \Gamma^{**}=\{\gamma\in \O(T):\ \gamma|A_T=1\}.$$
Note that $\Gamma/\Gamma^{**}\cong \O(q_T) \cong S_5$.
\begin{lemma}\label{su} Let $\O_T=\{\gamma \in \O(L_{K3}): \gamma(T)=T\}$.  Then the restriction homomorphisms 
$$\O_T\to \Gamma,\hspace{0.4cm} \{\gamma\in \O_T: \gamma(s_0)=s_0\}\to \Gamma^*
\mbox{ and }\hspace{0.4cm} \{\gamma\in \O_T: \gamma|S=1_S\}\to \Gamma^{**}$$ 
are surjective.
\end{lemma}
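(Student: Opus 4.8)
The statement to prove is Lemma \ref{su}: that the restriction maps from various subgroups of $\O(L_{K3})$ stabilizing $T$ (and additionally fixing $s_0$, or fixing $S$ pointwise) surject onto $\Gamma$, $\Gamma^*$, $\Gamma^{**}$ respectively. The natural tool is Nikulin's theory of primitive embeddings and the behaviour of isometries under gluing, exactly as packaged in \cite{N1}.

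The plan is to use the standard extension criterion (\cite[Corollary 1.5.2]{N1}): an isometry $\gamma_T\in\O(T)$ together with an isometry $\gamma_S\in\O(S)$ extends to an isometry of $L_{K3}$ preserving the primitive sublattice $T$ (and hence also $S=T^\perp$) if and only if the two induced isometries of the discriminant forms agree, i.e. $\bar\gamma_S = \bar\gamma_T$ under the fixed gluing isomorphism $A_S\cong A_T$, $q_S\cong -q_T$ described before Lemma \ref{iso}. So for the first map, given $\gamma\in\Gamma=\O(T)$, I would consider the induced $\bar\gamma\in\O(q_T)$; by Lemma \ref{iso} the map $\O(S)\to\O(q_S)$ is surjective, so — transporting along the gluing isomorphism — there is $\gamma_S\in\O(S)$ inducing the same isometry of the discriminant group. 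Then the pair $(\gamma,\gamma_S)$ glues to an element of $\O_T$ restricting to $\gamma$ on $T$. This proves surjectivity of $\O_T\to\Gamma$.

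For the second and third maps I would refine this. If $\gamma\in\Gamma^{**}$, i.e. $\gamma|A_T=1$, then I may take $\gamma_S=1_S$ (it induces the identity on $A_S$, matching), and the glued isometry fixes $S$ pointwise; hence $\{\gamma\in\O_T:\gamma|S=1_S\}\to\Gamma^{**}$ is surjective. For $\Gamma^*$, note that under the gluing isomorphism the class $s_0/2\in A_S$ corresponds to $\xi_1\in A_T$, so the condition $\gamma(\xi_1)=\xi_1$ on $\Gamma^*$ translates to $\bar\gamma_S$ fixing $s_0/2\in A_S$. The point is then that one can choose the lift $\gamma_S\in\O(S)$ not merely inducing the prescribed discriminant isometry but actually fixing the vector $s_0\in S$: indeed $s_0$ spans the $A_1(-1)$ summand, which is orthogonally split off, so $\O(S)$ contains the copy of $\O(A_1^4)$ acting trivially on $s_0$, and — inspecting the generators of $\O(q_S)$ exhibited in the proof of Lemma \ref{iso} together with the explicit order-five generator listed there — one checks that the subgroup of $\O(S)$ fixing $s_0$ already surjects onto the stabilizer of $s_0/2$ in $\O(q_S)$. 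Taking such a $\gamma_S$, the glued isometry lies in $\{\gamma\in\O_T:\gamma(s_0)=s_0\}$ and restricts to $\gamma$ on $T$.

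The main obstacle is the middle case: verifying that the stabilizer of $s_0$ in $\O(S)$ is large enough to realize every element of $\Gamma^*$ on the discriminant side — equivalently that surjectivity of $\O(S)\to\O(q_S)$ survives after imposing $\gamma_S(s_0)=s_0$. This is a finite check: the relevant stabilizer in $\O(q_S)\cong S_5$ is the $S_4$ permuting $\xi_2,\dots,\xi_5$ (using the dictionary $s_0/2\mapsto\xi_1$, $(2s_0-\sum s_i+s_j)/2\mapsto\xi_{j+1}$), and one exhibits isometries of $S$ fixing $s_0$ — for instance the obvious permutations of $s_1,\dots,s_4$, which already generate this $S_4$ — that induce it. Once this is in hand, the three surjectivity statements follow uniformly from the gluing criterion. \qed
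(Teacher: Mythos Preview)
Your proof is correct and follows essentially the same route as the paper: both arguments use Nikulin's gluing criterion together with Lemma~\ref{iso} to lift $\gamma\in\O(T)$ by pairing it with a suitable $\beta\in\O(S)$ inducing the same discriminant isometry, taking $\beta=1_S$ for $\Gamma^{**}$ and $\beta$ fixing $s_0$ for $\Gamma^*$. You supply more detail than the paper (which simply points back to the proof of Lemma~\ref{iso}) by explicitly noting that the permutations of $s_1,\dots,s_4$ already realize the full $S_4$ stabilizer of $s_0/2\leftrightarrow\xi_1$, which is exactly the check needed.
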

\proof Let $\gamma\in \Gamma$. By Lemma \ref{iso} there exists $\beta\in \O(S)$ such that $\beta=\gamma$ on $A_S\cong A_T$. Then the isometry $\beta\oplus \gamma$ on $S\oplus T$ lifts to an isometry in $\O_T$. 
If $\gamma\in \Gamma^*$ or $\Gamma^{**}$ then $\beta$ can be choosen such that $\beta(s_0)=s_0$ 
or $\beta=1_S$, respectively (see the proof of Lemma \ref{iso}).\qed
\begin{remark}\label{}
There are two orbits of vectors with $q(x)=-1/2$ under the 
action of $\O(q_T)$:
$$O_1=\{\sum_{i=1}^5 \xi_i\},\hspace{1cm} O_2=\{\xi_1, \dots,\xi_5\}.$$
\end{remark}

\subsection{Moduli spaces}
\noindent Since both $S$ and $T$ are $2$-elementary lattices, the isometry $(1_S, -1_T)$ on $S\oplus T$ can be extended to an isometry $\iota$ of $L_{K3}$. Let $\alpha : H^2(X,\bbZ) \to L_{K3}$ be an isometry satisfying 
$\alpha(H^2(X,\bbZ)^{+}) = S$.  Then $\iota \circ \alpha = \alpha\circ \sigma^*$. Since $\sigma^*(\omega_X) = -\omega_X$ then the \emph{period} 
$$p_X(\alpha)=\alpha_{\bbC}(\omega_X)$$ belongs to the set
$$\mathcal D = \{ \omega \in \bbP(T\otimes \bbC) : (\omega, \omega) = 
0, (\omega, \bar{\omega}) > 0  \},$$
called the \emph{period domain of $S$-polarized K3 surfaces}. 
By Lemma 2, there are no $(-2)$-vectors orthogonal to the period, hence $p_X(\alpha)$  belongs to the complement of the divisor
$$\mathcal H = 
\bigcup_{r \in T,\ r^2=-2}{\mathcal H_r}\ \mbox{ where }\  \mathcal H_r = \{ \omega \in \mathcal D : (r, \omega) = 0\}.$$
Consider the orbit spaces
$$\mathcal M=\mathcal D/\Gamma, \hspace{1cm} \mathcal M^*=\mathcal D/\Gamma^*, \hspace{1cm} \mathcal M^{**}=\mathcal D/\Gamma^{**}.$$   
Let  $\mathcal W_6^2$ be the moduli space of pairs $(C,D)$ where $C$ is a  smooth curve of genus $6$ and $D\in W_6^2(C)$  (see \cite{ACGH})
 and let $\mathcal {\widetilde M}_6$ be the moduli space of plane sextics with four ordered nodes.
\begin{theorem}\label{bir} 
The geometric construction in {\rm \ref{geo}} defines a birational map
$$\mathcal P^{**}: \mathcal {\widetilde M}_6\dashrightarrow \mathcal M^{**}.$$
The map $\mathcal P^{**}$ is equivariant for the natural action of $S_5$,  taking quotients for this action and for the action of a subgroup isomorphic to $S_4$ gives  birational maps
$$ \mathcal P:\mathcal M_6\dashrightarrow \mathcal M, \hspace{1cm} \mathcal P^*: \mathcal W_6^2\dashrightarrow \mathcal M^*.$$
In fact it induces an isomorphism
$$\mathcal M_6 \setminus \{ \mbox{special curves} \} \cong \mathcal M \setminus (\mathcal H/\Gamma).$$
 \end{theorem}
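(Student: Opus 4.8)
The plan is to make the period map $\mathcal P^{**}$ explicit on a dense open of $\widetilde{\mathcal M}_6$, invert it by reconstructing the marked plane sextic from a period point, and then read off $\mathcal P$ and $\mathcal P^*$ by descending along the finite quotients by $S_5$ and by a subgroup $S_4$.

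First I would define $\mathcal P^{**}$. Starting from a generic point of $\widetilde{\mathcal M}_6$ --- a plane sextic with four ordered nodes in general position --- blowing up gives the quintic Del Pezzo surface $Y$ with an ordered basis $e_0,\dots,e_4$ of $\mathrm{Pic}(Y)$ and with $C\in|-2K_Y|$, and the double cover $\pi\colon X\to Y$ branched along $C$ is a K3 surface. By Lemma~\ref{ret} the sublattice $\pi^*\mathrm{Pic}(Y)=\la\bar e_0,\dots,\bar e_4\ra\subset S_X$ is exactly $H^2(X,\bbZ)^+\cong S$. I would choose an isometry $\alpha\colon H^2(X,\bbZ)\to L_{K3}$ with $\alpha(\bar e_i)=s_i$; this exists because $S$ embeds primitively in $L_{K3}$ with orthogonal complement $T$ (by \cite[Theorem 1.14.4]{N1}), such an embedding being unique up to $\O(L_{K3})$ and, by the surjectivity of $\O(S)\to\O(q_S)$ in Lemma~\ref{iso}, extendable to an isometry realizing the prescribed images of the $s_i$. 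Then $p_X(\alpha)=\alpha_\bbC(\omega_X)\in\mathcal D$, and $p_X(\alpha)\notin\mathcal H$ by Lemma~\ref{-2}: a vector $r\in T$ with $r^2=-2$ and $(r,\omega_X)=0$ would lie in $S_X\cap H^2(X,\bbZ)^-$. The remaining freedom in $\alpha$ is precisely precomposition with those $\gamma\in\O_T$ satisfying $\gamma|S=1_S$, which restrict to $\Gamma^{**}$ by Lemma~\ref{su}; hence $p_X$ descends to a well-defined point of $\mathcal M^{**}=\mathcal D/\Gamma^{**}$, defining $\mathcal P^{**}$ on a dense open.

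To see $\mathcal P^{**}$ is birational I would build the inverse. A generic $\Gamma^{**}$-orbit in $\mathcal D\setminus\mathcal H$ is represented by some $\omega$ with $\omega^\perp\cap T=0$; by surjectivity of the period map for lattice-polarized K3 surfaces there is an $S$-polarized K3 surface $(X,j\colon S\hookrightarrow S_X)$ with period $\omega$ and $S_X=j(S)$, $T_X=T$. Since $S$ and $T$ are $2$-elementary, $(1_S,-1_T)$ extends to an isometry $\iota$ of $L_{K3}$, so $X$ carries a Hodge isometry $\sigma^*$ that is $+1$ on $S_X$ and $-1$ on $T_X$; as $\omega\notin\mathcal H$ it preserves the ample cone, so by the Torelli theorem it is induced by an involution $\sigma$ of $X$. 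Its fixed curve $C$ lies in $|-2K_{Y'}|$ on $Y'=X/\sigma$, whose Picard lattice is $S(\tfrac12)\cong\bbZ^{1,4}$; the class $s_0/2$ defines a birational morphism $Y'\to\bbP^2$ contracting the four classes $s_i/2$ and sending $C$ to a plane sextic with four ordered nodes, and by adjunction $K_C=-K_{Y'}|_C$ with $H^0(Y',-K_{Y'})\cong H^0(C,K_C)$, so $C$ is the canonical model of a non-special curve of genus $6$. This reconstruction inverts $\mathcal P^{**}$ and works in families, so $\mathcal P^{**}$ is birational; equivalently it is generically injective (for generic $\omega$ one has $S_X=S=H^2(X,\bbZ)^+$, so $\sigma^*$ fixes all ample classes and the reconstruction runs) and dominant, since $\dim\widetilde{\mathcal M}_6=3\cdot6-3=15=\mathrm{rank}\,T-2=\dim\mathcal D$.

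Finally, $S_5=\Aut(Y)$ acts on $\widetilde{\mathcal M}_6$ by re-marking $e_0,\dots,e_4$, which by Lemma~\ref{iso} is the action of the copy of $S_5$ in $\O(S)$; by Lemma~\ref{su} these lift to $\O_T$ and restrict on $T$ to $\O(q_T)=\Gamma/\Gamma^{**}$, so $\mathcal P^{**}$ is $S_5$-equivariant. Since $\mathcal M_6$ is birationally $\widetilde{\mathcal M}_6/S_5$ --- the generic genus-$6$ curve has exactly five $g^2_6$, and the automorphism group of the dual graph of the ten lines on $Y$ is $S_5$, acting simply transitively on the markings --- and $\mathcal W_6^2$ is $\widetilde{\mathcal M}_6/S_4$ with $S_4\subset S_5$ the stabilizer of the blowing-down class $s_0$, corresponding via $s_0/2\mapsto\xi_1$ to $\mathrm{Stab}_{\O(q_T)}(\xi_1)=\Gamma^*/\Gamma^{**}$, the maps $\mathcal P$ and $\mathcal P^*$ are the induced quotient maps and are birational together with $\mathcal P^{**}$. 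For the last assertion, $\mathcal P$ is defined on all of $\mathcal M_6\setminus\{\mbox{special curves}\}$ by the same construction applied to the possibly nodal Del Pezzo surface attached to a non-special curve; it is injective by Torelli, and its image is exactly $\mathcal M\setminus(\mathcal H/\Gamma)$, since $p_X(\alpha)\notin\mathcal H$ always (Lemma~\ref{-2}) while every $\omega\notin\mathcal H$ reconstructs a non-special curve, and a bijective birational morphism between these normal varieties is an isomorphism. The main obstacle is the reconstruction step: one must verify that the Torelli-type input --- surjectivity of the period map, effectiveness of $\sigma^*$ (hence existence of the geometric involution $\sigma$), and nefness and bigness of $-K_{Y'}$ --- holds precisely over $\mathcal D\setminus\mathcal H$, i.e. that for $\omega\notin\mathcal H$ no $(-2)$-curve on $X$ obstructs the quotient $X\to Y'$ and $Y'$ is no more degenerate than nodal, so that the reconstruction agrees with the Del Pezzo double-cover construction throughout $\mathcal D\setminus\mathcal H$.
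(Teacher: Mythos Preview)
Your approach is essentially the same as the paper's: define $\mathcal P^{**}$ via a marking sending $\bar e_i\mapsto s_i$, show the ambiguity lies in $\Gamma^{**}$, and invert over $\mathcal D\setminus\mathcal H$ by surjectivity of the period map plus Torelli to produce the involution $\sigma$, then recover the marked sextic from $X/\sigma$. The one place where the paper is more complete is exactly the obstacle you flag at the end: for \emph{all} $\omega\in\mathcal D\setminus\mathcal H$ (not just generic ones) the paper invokes \cite[Theorem 4.2.2]{N2}, which reads off directly from the invariants $(s,\ell,\delta)=((1,4),5,1)$ of $S$ that the fixed locus of $\sigma$ is a smooth irreducible curve of genus $6$; since this curve lies in $|-2K_Y|$ one gets $K_Y^2=5$ and $-K_Y$ nef and big, so $Y$ is automatically a nodal quintic Del Pezzo and the reconstruction goes through on the whole complement of $\mathcal H$.
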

\proof 
Let $C$ be a plane sextic with $4$ ordered nodes.
The construction in \ref{geo} associates to $C$ a K3 surface $X$ which is birational to the the double cover of $\bbP^2$ branched along the plane sextic. If $C$ is general, then $S_X=H^2(X,\bbZ)^+$ is the pull-back of the Picard lattice of $Y$ and $\{ \bar e_0, \bar e_1,\dots, \bar e_4\}$ gives an ordered basis of $S_X$. 

In general, by using Lemma \ref{su}, choose a marking $\alpha:H^2(X,\bbZ)\rightarrow L_{K3}$ such that $\alpha(H^2(X,\bbZ)^+)\subset S$ and 
$\alpha (\bar e_i)=s_i $,\  $0\leq i\leq 4$.  By Lemma \ref{-2} $\alpha_{\bbC}(\omega_X) \in \mathcal D\setminus \mathcal H$. 
Moreover, if $\alpha_1, \alpha_2$ are two markings of this type, then $\alpha_2\alpha_1^{-1}$ preserves the ordered basis $\{ s_i\}$, hence its restriction to $T$ belongs to  $\Gamma^{**}$.
Thus we can associate to $C$ a point in $\mathcal D/\Gamma^{**}$, i.e. we defined a rational map
$\mathcal P^{**}:\mathcal {\tilde M}_6\dashrightarrow \mathcal M^{**}.$

Conversely, let $\omega\in \mathcal D\setminus \mathcal H$. By the surjectivity theorem of the period map (\cite{Ku,PP}) there exists a marked K3 surface $(X,\alpha)$ such that $ \alpha_{\bbC}(\omega_X)=\omega$. Then $\iota(\omega)=-\omega$ and there exist no 
$(-2)$-vectors in $T\cap \omega^{\perp}$ since $\omega\not\in \mathcal H$,  hence $\iota$ preserves an ample class.
It now follows from the Torelli theorem \cite[Theorem 3.10]{Na} that $\iota$ is induced by an automorphism $\sigma$ on $X$.

By \cite[Theorem 4.2.2]{N2} the fixed locus of $\sigma$ is a smooth curve $C$ of genus six. 
The quotient surface $Y=X/(\sigma)$ is smooth and the image of $C$ belongs to $|-2K_Y|$. Hence $-K_Y$ is nef and big with 
$K_Y^2=5$, i.e. $Y$ is a nodal quintic del Pezzo surface. 
In fact, the pull back of $\Pic(Y)$ is exactly $\alpha^{-1}(S)\subset S_X$.

If we choose $\omega\in \mathcal D\setminus \mathcal H$ up to the action of $\Gamma^{**}$ then, by Lemma \ref{su}, we get $\alpha$ up to an isometry in $\O_T$ which preserves an ordered basis $\{ s_i\}$.  Hence this gives a K3 surface $X$ with a class $\alpha^{-1}(s_i)\in\alpha^{-1}(S)$, \ $0\leq i \leq 4$.  By blowing down the corresponding four $(-1)$-curves on $Y$, we get a plane sextic with four ordered nodes.  This proves that $\mathcal P^{**}$ is birational.

The quotients $\Gamma/\Gamma^{**} \cong S_5$ and $\Gamma^*/\Gamma^{**} \cong S_4$ act on $\mathcal M^{**}$ 
and by Lemma \ref{su} they lift to isometries of $L_{K3}$ which preserve $T$ and $S$.
By taking the quotient for these actions, we get birational maps $\mathcal P$ and $\mathcal P^*$.
\qed

\section{The discriminant divisor}
In the previous section we introduced a divisor $\mathcal H$ in $\mathcal D$. The image of this divisor in $\calM$ or $\calM^*$ will be called \emph{discriminant divisor}. We now describe its structure and its geometric meaning.

\subsection{Irreducible components}
\begin{lemma}\label{orbit} Let $\Delta$ be the set of vectors $r\in T$ with $r^2=-2$, then
\begin{enumerate}[$\bullet$]
\item the group $\Gamma$ has three orbits in $\Delta :$
$$\Delta_1 = \{ r \in \Delta : r/2 \notin T^*\},\ \ \Delta_2 = \{ r \in \Delta : r/2 \in O_1\},\ \ 
\Delta_3 = \{ r \in \Delta : r/2 \in O_2\};$$

\item the group $\Gamma^*$ has $4$ orbits in $\Delta :$\ \ 
$\Delta_1, \Delta_2$ and two orbits decomposing $\Delta_3$
$$\Delta_{3a}=\{r\in \Delta: r/2=\xi_2\},\ \ \Delta_{3b}=\{r\in \Delta: r/2=\xi_1\}.$$
\end{enumerate}
\end{lemma}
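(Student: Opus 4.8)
The plan is to separate $(-2)$-vectors of $T$ by two $\O(T)$-invariant quantities: the divisibility $\mathrm{div}(r)$, i.e. the positive generator of $(r,T)\subseteq\bbZ$, and --- when $\mathrm{div}(r)=2$ --- the $\O(q_T)$-orbit of the class $[r/2]\in A_T$. First I would record the elementary constraints. A vector $r$ with $r^2=-2$ is primitive, and $\mathrm{div}(r)$ divides $(r,r)=-2$, so $\mathrm{div}(r)\in\{1,2\}$; moreover $\mathrm{div}(r)=2$ exactly when $r/2\in T^*$, and then $q([r/2])=r^2/4=-1/2$ in $\bbQ/2\bbZ$. By Lemma \ref{disc} the elements of $A_T$ with $q=-1/2$ are precisely $\xi_1,\dots,\xi_5$ and $\sum_{i=1}^5\xi_i$, i.e. the sets $O_2$ and $O_1$. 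Hence $\Delta$ is the disjoint union of $\Delta_1$ ($r/2\notin T^*$), $\Delta_2$ ($[r/2]\in O_1$) and $\Delta_3$ ($[r/2]\in O_2$), and each piece is visibly $\Gamma$-stable since $\Gamma$ acts on $T^*$, preserves divisibility and maps to $\O(q_T)$. Nonemptiness is checked with explicit representatives: a root $\rho$ of the $E_8$-summand has $\mathrm{div}(\rho)=1$ ($E_8$ being unimodular), so $\rho\in\Delta_1$; with $e,f$ a hyperbolic basis of one copy of $U$, the vector $r_1+2e$ lies in $\Delta_3$ and $\sum_{i=1}^5 r_i+2e+2f$ lies in $\Delta_2$.

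Transitivity within each set is the substantive step, and here I would invoke Eichler's criterion: since $T\cong U\oplus U\oplus(E_8\oplus A_1^5)$ contains two orthogonal hyperbolic planes, the stable orthogonal group $\widetilde{\O}(T)=\ker(\O(T)\to\O(q_T))$ acts transitively on the primitive vectors of $T$ with prescribed square and prescribed image of $v/\mathrm{div}(v)$ in $A_T$. Consequently $\Delta_1$ (invariant $(-2,0)$) and $\Delta_2$ (invariant $(-2,\sum_{i=1}^5\xi_i)$) are each a single $\widetilde{\O}(T)$-orbit, hence a single $\Gamma$-orbit, while $\Delta_3$ is the union of the five $\widetilde{\O}(T)$-orbits with invariants $(-2,\xi_i)$, $1\le i\le 5$. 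As $\Gamma\to\O(q_T)\cong S_5$ is surjective by Lemma \ref{iso} and $S_5$ permutes the $\xi_i$ transitively, $\Gamma$ fuses these five pieces into one orbit; this proves the first assertion. For $\Gamma^*$ I would first determine its image in $\O(q_T)$: it contains $\widetilde{\O}(T)$ and, by Lemma \ref{iso}, every element of $\mathrm{Stab}_{\O(q_T)}(\xi_1)$ lifts to an isometry of $T$ fixing $\xi_1$, so the image is exactly the subgroup $\cong S_4$ fixing $\xi_1$ and permuting $\xi_2,\dots,\xi_5$. This subgroup fixes $O_1$ and has orbits $\{\xi_1\}$ and $\{\xi_2,\dots,\xi_5\}$ on $O_2$, so the same Eichler argument leaves $\Delta_1$ and $\Delta_2$ as single $\Gamma^*$-orbits and splits $\Delta_3$ into $\Delta_{3b}=\{r\in\Delta:[r/2]=\xi_1\}$ and $\Delta_{3a}=\{r\in\Delta:[r/2]\in\{\xi_2,\dots,\xi_5\}\}$.

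The step I expect to be the main obstacle is precisely this transitivity part, together with its bookkeeping: one must check that $\Delta_1$ coincides with the divisibility-one locus, that the relevant Eichler invariant is $r/\mathrm{div}(r)$ rather than $r$ itself, and that $(-2)$-vectors are automatically primitive so that Eichler's criterion applies. The rest --- the list of classes with $q=-1/2$ and the explicit representatives witnessing nonemptiness --- is routine given Lemma \ref{disc}.
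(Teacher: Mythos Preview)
Your argument is correct, and the invariants you use (divisibility and the class $[r/\mathrm{div}(r)]\in A_T$) are precisely the ones that govern the orbit structure. The paper, however, reaches the same conclusion by a different technical device: instead of Eichler's criterion, it applies Nikulin's classification of primitive embeddings \cite[Proposition~1.15.1]{N1} to the rank-one lattice $\Lambda=\langle r\rangle$. In that framework the dichotomy $\mathrm{div}(r)\in\{1,2\}$ becomes the choice of the gluing subgroup $H\subset A_\Lambda$ (either $H=0$ or $H=\bbF_2$), and transitivity in the $H=0$ case is deduced from the uniqueness of the genus of $\Lambda^{\perp}$ together with surjectivity of $\O(\Lambda^{\perp})\to\O(q_{\Lambda^{\perp}})$, while in the $H=\bbF_2$ case the embedding is determined by which $\O(q_T)$-orbit of $(-1/2)$-classes receives $r/2$. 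Your route via Eichler is shorter and more transparent for this specific question, since it handles orbits of vectors directly and the hypothesis $U\oplus U\subset T$ is manifestly satisfied; the paper's route stays entirely within the Nikulin machinery already in use elsewhere in the article and does not require citing an additional result. Either way the reduction to $\O(q_T)\cong S_5$ and its stabilizer $S_4$ of $\xi_1$ is the same, and your treatment of the $\Gamma^*$-case is cleaner than the paper's one-line ``similarly''.
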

\proof
Given a vector $r\in \Delta$ we will classify the embeddings of $\Lambda=\langle r\rangle$ in $T$ up to the action of $\Gamma$ by applying \cite[Proposition 1.15.1]{N1}. 
We first need to give an isometry $\alpha$ between a subgroup of $A_{\Lambda}$ and a subgroup of $A_T \cong \bbF_2^5$. If $H$ is such a subgroup, then either $H=0$ or $H=\bbF_2$. Note that $H=\bbF_2$ if and only if $r/2\in T^*$. 

In case $H=0$, since there is a unique a lattice $K$ with $q_{ K}=q_{\Lambda}\oplus (-q_T)$  and $\O(K)\rightarrow \O(q_K)$ is surjective by \cite[Theorem 1.14.2]{N1}, then by \cite[Proposition 1.15.1]{N1} there is a unique embedding of $\Lambda$ in $T$ such that $\Lambda\oplus \Lambda^{\perp}=T$.

In case $H=\bbF_2$ there are two different embeddings of $\Lambda$, according to
the choice of $\alpha(r/2)$ in $O_1$ or $O_2$. This gives the first assertion.  

The second assertion can be proved in a similar way, by observing that $\Gamma^*$ has three orbits on the set of vectors $x\in A_T$ with $q(x)=-1/2$.
\qed\\

  For $r \in \Delta_i$, let $ T_i = \{ x \in T : (x, r)= 0\}$ and
denote by $S_i$ the orthogonal complement of $ T_i$ in $L_{K3}$.  Then we have:
\begin{lemma}\label{}
$$\begin{array}{ll}
 S_1 \cong A_1(-1)\oplus A_1^{5},& T_1 \cong U \oplus U \oplus E_7 \oplus A_1^{ 5},\\
 S_2 \cong U(2) \oplus D_4,& T_ 2 \cong U\oplus U(2)\oplus E_8\oplus D_4,\\
 S_3 \cong U \oplus A_1^{4},& T_3 \cong U \oplus U \oplus E_8 \oplus A_1^{4}.
\end{array}$$
\end{lemma}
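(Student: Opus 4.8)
The plan is to compute, for each $i=1,2,3$, the lattice $T_i = r^\perp \cap T$ (for a representative $r \in \Delta_i$) directly, and then obtain $S_i$ by the standard orthogonal-complement/discriminant-form bookkeeping, invoking Nikulin's uniqueness criterion \cite[Theorem 1.14.4]{N1} to identify the resulting genus with the explicit lattice listed. I would organize the work orbit by orbit, always exploiting the concrete orthogonal decomposition $T = U \oplus U \oplus E_8 \oplus A_1^5$ together with the description of representatives coming from Lemma \ref{orbit}.

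First, for $\Delta_1$: here $r/2 \notin T^*$, so by the proof of Lemma \ref{orbit} we may take $r$ to be a primitive $(-2)$-vector with $r^\perp \oplus \langle r\rangle = T$ (the embedding with $H = 0$). Choosing $r$ inside one of the $U$ summands, say $r = e - f$ with $e,f$ the standard basis of that $U$, one checks $r^\perp$ inside that $U$ is $\langle e+f\rangle \cong A_1$, but actually it is cleaner to realize $r$ as a root of the $E_8$ summand or of an $A_1$; the most transparent choice is to put $r$ inside $E_8$, since $E_8$ contains a root whose orthogonal complement in $E_8$ is $E_7$. Then $T_1 = U \oplus U \oplus E_7 \oplus A_1^5$ and $S_1 = S \oplus \langle r\rangle \cong A_1(-1) \oplus A_1^4 \oplus A_1 = A_1(-1)\oplus A_1^5$, which is exactly the claim; one should double-check that this really is the full orthogonal complement in $L_{K3}$ by comparing discriminant forms, i.e. $q_{T_1} = q_{E_7}\oplus q_{A_1^5}$ and $-q_{S_1} = q_{T_1}$, and that the genus so determined is unique by \cite[Theorem 1.14.4]{N1}.

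For $\Delta_2$ (where $r/2 \in O_1$, i.e. $r/2 \equiv \sum_{i=1}^5 \xi_i$ modulo $T$): here $r$ is $2$-divisible in $T^*$ and the relevant glue vector involves all five $\xi_i$. One writes $r = v + (r_1 + \cdots + r_5)$ with $v \in U^2 \oplus E_8$ suitably chosen so that $r^2 = -2$; since $(r_1+\cdots+r_5)^2 = -10$ this forces $v^2 = 8$. The computation of $r^\perp$ then splits: inside $A_1^5$ the orthogonal complement of $r_1+\cdots+r_5$ is the $D_4$-type root lattice $\{\sum a_i r_i : \sum a_i = 0\}$, and inside $U^2\oplus E_8$ one gets $v^\perp$, a unimodular-up-to-the-glue lattice; tracking the glue carefully should yield $T_2 \cong U \oplus U(2) \oplus E_8 \oplus D_4$. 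Then $S_2$ is recovered as before, with $q_{S_2} = -q_{T_2}$; the answer $U(2)\oplus D_4$ has the right rank $6$, the right signature $(1,5)$, and one verifies the discriminant form matches, again applying \cite[Theorem 1.14.4]{N1}.

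For $\Delta_3$ ($r/2 \in O_2$, i.e. $r/2 \equiv \xi_1$): now $r = r_1 + w$ with $r_1$ a single generator of $A_1^5$, and since $r_1^2 = -2$ we may simply take $r = r_1$ itself. Then $T_3 = U \oplus U \oplus E_8 \oplus \langle r_2, r_3, r_4, r_5\rangle = U^2 \oplus E_8 \oplus A_1^4$ immediately, and $S_3$ is the orthogonal complement of this in $L_{K3}$; since $T_3$ has discriminant group $\bbF_2^4$ with the standard $-q_{A_1^4}$, we get $S_3$ with $q_{S_3} = q_{A_1^4}$, rank $6$, signature $(1,5)$, and the genus $U \oplus A_1^4$ is uniquely determined — consistent with the claim.

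The main obstacle I expect is the $\Delta_2$ case: there the $(-2)$-vector is not a root of any single summand, its orthogonal complement mixes the $A_1^5$ part with the $U^2 \oplus E_8$ part through the $2$-elementary glue, and one must keep precise track of how the discriminant form degenerates (the appearance of the $U(2)$ summand is exactly this subtlety). In each case the uniqueness input \cite[Theorem 1.14.4]{N1} is what converts the discriminant-form match into an actual lattice isomorphism, so the routine part is just confirming the three invariants $(\operatorname{sign}, \ell, q)$ agree with the tabulated lattices.
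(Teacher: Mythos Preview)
Your treatment of $\Delta_1$ and $\Delta_3$ is fine and matches the paper: pick $r$ to be a root of $E_8$ (respectively a generator of one $A_1$ summand) in the decomposition $T=U\oplus U\oplus E_8\oplus A_1^5$, and read off the orthogonal complement.

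The gap is in $\Delta_2$. Your intermediate claim that the orthogonal complement of $r_1+\cdots+r_5$ inside $A_1^5$ is ``the $D_4$-type root lattice'' is false: that sublattice is $\{\sum a_i r_i:\sum a_i=0\}$, which with the induced form is $A_4(2)$ (Gram matrix $2$ times the $A_4$ Cartan matrix, discriminant $80$), not $D_4$ (discriminant $4$). So the split picture $v^\perp\oplus A_4(2)$ is only an index-$?$ sublattice of $T_2$, and recovering $U\oplus U(2)\oplus E_8\oplus D_4$ from it requires genuinely nontrivial glue bookkeeping that you have not supplied; ``tracking the glue carefully should yield'' is precisely the step that is missing.

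The paper sidesteps this entirely. Since $T$ is $2$-elementary with invariants $(s,\ell,\delta)=((2,15),5,1)$, Nikulin's classification \cite[Theorem~3.6.2]{N1} gives a second decomposition
\[
T\cong U\oplus U(2)\oplus E_8\oplus D_4\oplus A_1,
\]
and one checks that the generator of this $A_1$ summand lies in $\Delta_2$ (its half lands in the orbit $O_1$). Then $T_2$ is immediate. For the $S_i$, the paper again invokes \cite[Theorem~3.6.2]{N1} rather than \cite[Theorem~1.14.4]{N1}: since each $T_i$ is $2$-elementary, so is its orthogonal complement $S_i$ in $L_{K3}$, and the triple $(s,\ell,\delta)$ pins down $S_i$ directly. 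Your route via matching $q_{S_i}=-q_{T_i}$ and then appealing to genus uniqueness works too, but the $2$-elementary classification is the sharper tool here.
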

\proof
Because of Lemma \ref{orbit} the isomorphism class of $T_i$ does not depend on the choice of $r\in \Delta_i$. If $r\in\Delta_1$ or $\Delta_3$ then we can assume $r$ to be one generator of $E_8$ or respectively one generator of $A_1$ in a decomposition
$T=U\oplus U\oplus E_8\oplus A_1^{ 5}$.
If $r\in\Delta_2$ we can assume $r$ to be a generator of $A_1$ in a decomposition
$T= U\oplus U(2) \oplus E_8 \oplus D_4\oplus A_1$.
In all these cases the orthogonal complement of $r$ in $T$ can be easily computed.
The lattices $S_i$ can be computed by applying \cite[Theorem 3.6.2]{N1}.
\qed
 \begin{corollary}
The divisor $\mathcal H/\Gamma$  has $3$ irreducible components $\mathcal H_1,\mathcal H_2,\mathcal H_3$ and
$\mathcal H/\Gamma^*$ has $4$ irreducible components  $\mathcal H^*_1, \mathcal H^*_2, \mathcal H^*_{3a}, \mathcal H^*_{3b}$ such that
\begin{enumerate}[$\bullet$]
\item $\mathcal H^*_i\rightarrow \mathcal H_i,\ \ i=1,2 $
have degree $5$,
\item $\mathcal H^*_{3a}\rightarrow \mathcal H_3,\ \   \mathcal H^*_{3b}\rightarrow \mathcal H_3$ have degree $4$ and $1$ respectively.
\end{enumerate}
\end{corollary}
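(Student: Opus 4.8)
The plan is to deduce the component structure directly from Lemma~\ref{orbit} and to compute the covering degrees by a fibre count for the degree-$5$ map $\pi\colon\calM^*=\calD/\Gamma^*\to\calM=\calD/\Gamma$.

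First I would note that $\mathcal H/\Gamma=\bigcup_{r\in\Delta}\mathcal H_r/\Gamma$ has one irreducible component for every $\Gamma$-orbit in $\Delta$, namely the (irreducible) image of $\mathcal H_{r_0}$ for a representative $r_0$; since $\mathcal H_r=\mathcal H_{r'}$ only for $r'=\pm r$ and $-1\in\Gamma$, distinct orbits give distinct components. Likewise $\mathcal H/\Gamma^*$ has one component per $\Gamma^*$-orbit. By Lemma~\ref{orbit} we obtain $\mathcal H_1,\mathcal H_2,\mathcal H_3$ attached to $\Delta_1,\Delta_2,\Delta_3$, and $\mathcal H_1^*,\mathcal H_2^*,\mathcal H_{3a}^*,\mathcal H_{3b}^*$ attached to $\Delta_1,\Delta_2,\Delta_{3a},\Delta_{3b}$; as $\Delta_1,\Delta_2$ stay single $\Gamma^*$-orbits and $\Delta_3=\Delta_{3a}\sqcup\Delta_{3b}$, the map $\pi$ carries $\mathcal H_i^*$ onto $\mathcal H_i$ $(i=1,2)$ and carries both $\mathcal H_{3a}^*$ and $\mathcal H_{3b}^*$ onto $\mathcal H_3$.

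To get the degrees I would fix $i$, a generic point $p$ of $\mathcal H_i$, and a generic lift $\omega\in\mathcal H_{r_0}$ with $r_0\in\Delta_i$. For generic such $\omega$ one has $\omega^\perp\cap T=\la r_0\ra$ --- a very general positive $2$-plane in $r_0^\perp\otimes\bbR$ has orthogonal complement meeting the lattice in $0$ --- so $\omega$, and every translate $g\omega\in\mathcal H_{gr_0}$, lies on a unique $(-2)$-hyperplane; moreover the stabiliser $\Gamma_\omega$ of a generic $\omega$ is $\la -1,s_{r_0}\ra$, where $s_{r_0}$ is the reflection in $r_0$. The key observation is that $-1$ and $s_{r_0}$, hence every conjugate $g\Gamma_\omega g^{-1}=\la -1,s_{gr_0}\ra$, lie in $\Gamma^*$: the element $-1$ acts trivially on $A_T\cong\bbF_2^5$, and a reflection in a $(-2)$-vector acts trivially on the discriminant group, so it lies in $\Gamma^{**}$. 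Consequently the double cosets $\Gamma^*\backslash\Gamma/\Gamma_\omega$ reduce to the ordinary cosets $\Gamma^*\backslash\Gamma$, the fibre $\pi^{-1}(p)=\{\,\Gamma^* g\omega\,\}$ has exactly $[\Gamma:\Gamma^*]=5$ points, and $\Gamma^* g\omega$ lies in the component of $\mathcal H/\Gamma^*$ attached to the $\Gamma^*$-orbit of $gr_0$. So $\deg(\mathcal H_{i,a}^*\to\mathcal H_i)$ is the number of cosets $\Gamma^* g$ with $gr_0\in\Delta_{i,a}$ (for a fixed representative $r_0\in\Delta_{i,a}$), and these sum to $5$; for $i=1,2$ there is a single orbit, giving degree $5$.

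For $i=3$ I would take $r_0$ with $r_0/2\equiv\xi_1$ in $A_T$, so $r_0\in\Delta_{3b}$. Writing $\bar g\in\O(q_T)\cong S_5$ for the image of $g$, one has $gr_0/2\equiv\bar g(\xi_1)$, hence $gr_0\in\Delta_{3b}$ iff $\bar g$ fixes $\xi_1$; as $\Gamma\to\O(q_T)$ is surjective (Lemma~\ref{iso}) and $\Gamma^*$ is exactly the preimage of $\mathrm{Stab}_{S_5}(\xi_1)$, this condition singles out the trivial coset, so $\mathcal H_{3b}^*\to\mathcal H_3$ has degree $1$ and $\mathcal H_{3a}^*\to\mathcal H_3$ has degree $5-1=4$. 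The only step demanding real care is the stabiliser claim: for a very general $\omega\in\mathcal H_{r_0}$ one needs that nothing in $\Gamma$ outside $\la -1,s_{r_0}\ra$ fixes $[\omega]$ --- equivalently, that the very general period of the rank-$16$ lattice $r_0^\perp\cap T$ has stabiliser $\{\pm1\}$ in its orthogonal group --- and then the whole mechanism rests on the fact that this stabiliser already lies in $\Gamma^*$, so that $\pi$ is in fact étale over the generic point of each $\mathcal H_i$ and the five sheets split exactly as computed.
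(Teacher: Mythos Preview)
Your argument is correct. The paper states this corollary without proof, treating it as immediate from Lemma~\ref{orbit}; you have supplied the details the paper omits, in particular the generic-fibre computation showing that the stabiliser $\langle -1,s_{r_0}\rangle$ lies in $\Gamma^{**}\subset\Gamma^*$, so that the five sheets of $\calM^*\to\calM$ over a generic point of $\calH_i$ are indexed by $\Gamma^*\backslash\Gamma\cong S_5/S_4$ and distribute according to where $\bar g$ sends $r_0/2\in A_T$. This is exactly the mechanism implicit in the paper's orbit count, made rigorous.
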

\noindent Let $\iota_i$ be the isometry of $L_{K3}$ defined by $\iota_i | S_i = 1_{S_i}$ and $\iota_i | T_i = -1_{T_i}$.
The following can be proved by means of Torelli theorem, as in the proof of Theorem \ref{bir}.
\begin{lemma}\label{aut}
There exists a K3 surface $X_i$ such that $S_{X_i}\cong S_i$ and carrying an involution $\sigma_i$ of $X_i$ with $\sigma_i^*=\iota_i$.
\end{lemma}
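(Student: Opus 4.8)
The strategy mirrors the surjectivity argument in the proof of Theorem \ref{bir}, so the plan is to produce a suitably generic period point in the period domain attached to $T_i$, invoke the surjectivity of the period map to realize it by a marked K3 surface $(X_i,\alpha_i)$, and then use the Torelli theorem to show that the lattice-theoretic involution $\iota_i$ descends to a genuine automorphism $\sigma_i$ of $X_i$. First I would record that, by construction, $S_i$ and $T_i$ are orthogonal complements of one another in $L_{K3}$ and that $\iota_i$, being $1$ on $S_i$ and $-1$ on $T_i$, is a well-defined isometry of $L_{K3}$ precisely because $S_i$ and $T_i$ are both $2$-elementary with $q_{S_i}\cong -q_{T_i}$ (the same reasoning used to build $\iota$ in Section 2); this requires checking that the glue map between $A_{S_i}$ and $A_{T_i}$ is compatible with the trivial and the $-1$ actions, which it is since $-1$ acts trivially on a $2$-elementary discriminant form.

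Next I would choose $\omega \in \bbP(T_i\otimes\bbC)$ with $(\omega,\omega)=0$, $(\omega,\bar\omega)>0$, and, crucially, $(\omega,r)\neq 0$ for every $(-2)$-vector $r\in T_i$; such $\omega$ exists because the $(-2)$-hyperplanes form a countable union of proper subvarieties of the relevant type IV domain $\mathcal D_i$. By the surjectivity of the period map (\cite{Ku,PP}) there is a marked K3 surface $(X_i,\alpha_i)$ with $\alpha_{i,\bbC}(\omega_{X_i})=\omega$. Since $\omega\in T_i\otimes\bbC$, the isometry $\iota_i$ fixes $\omega$ up to sign — indeed $\iota_i(\omega)=-\omega$ — and because there is no $(-2)$-vector in $T_i$ orthogonal to $\omega$, the involution $\iota_i$ sends an ample class to an ample class (the argument is identical to the one in Theorem \ref{bir}: $\iota_i$ is $+1$ on $S_{X_i}=\alpha_i^{-1}(S_i)$ for generic $\omega$, hence preserves the Kähler cone after composing with a Weyl group element if necessary). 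The Torelli theorem \cite[Theorem 3.10]{Na} then yields an automorphism $\sigma_i$ of $X_i$ with $\sigma_i^*=\iota_i$, and by construction $S_{X_i}\cong S_i$, which is the assertion.

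The one point demanding care — and the main potential obstacle — is the genericity needed so that $S_{X_i}$ is \emph{exactly} $S_i$ rather than something strictly larger: a priori $\omega$ could be orthogonal to extra algebraic classes lying outside $S_i$, which would enlarge the Picard lattice and could destroy the identification of $\iota_i$ as the covering involution of a quotient of the expected type. This is handled by the same countability argument — the "bad" loci where $S_{X_i}\supsetneq S_i$ are contained in a countable union of proper sub-type-IV domains $\mathcal H_r$ for $r\in T_i$, $r^2=-2$, together with the sublattice loci, so a very general $\omega$ avoids them all. A secondary check is that $\iota_i$ really acts on the \emph{Hodge structure} of $X_i$ as an isometry fixing the period line up to sign and preserving an ample class; this is automatic once $\alpha_i$ is chosen compatibly with the splitting $S_i\oplus T_i$, exactly as $\iota$ was handled after Lemma \ref{su} in the proof of Theorem \ref{bir}. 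No new computation beyond the lattice data already tabulated in Lemma 9 is required.
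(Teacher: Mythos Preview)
Your proposal is correct and follows essentially the same approach as the paper, which simply refers the reader to the Torelli argument already given in the proof of Theorem~\ref{bir}; your write-up is in fact more detailed than the paper's one-line deferral. The hedge about ``composing with a Weyl group element if necessary'' is unnecessary once you have arranged $S_{X_i}=\alpha_i^{-1}(S_i)$ exactly, since $\iota_i$ is then $+1$ on the entire Picard lattice and preserves the ample cone automatically.
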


\subsection{Curves of genus six with a node}
 Let $C_1$ be a generic plane sextic with five nodes. The blowing up of the projective plane at the nodes is a quartic del Pezzo
surface $Y_1$ and its double cover branched along the strict transform of $C_1$ is a K3 surface $X$. 
Alternatively, if we blow up the plane at four nodes, we get a quintic del Pezzo surface on which the strict transform of $C_1$ is a curve 
of genus six with a node.  The pull-back of $\Pic(Y_1)$ is a sublattice of the Picard lattice of $X$ isomorphic to $S_1$.
We now show that also the converse is true
\begin{proposition}\label{h1}\
The K3 surface $X_1$ is birational to the double cover of a quintic del Pezzo surface branched along a generic curve of genus six 
with a node or, equivalently, to a double plane branched along a generic sextic with $5$ nodes. 
\end{proposition}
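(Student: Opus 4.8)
The plan is to run the same Torelli-type argument used in the proof of Theorem \ref{bir}, but now starting from the lattice $S_1 \cong A_1(-1)\oplus A_1^{5}$ rather than $S$. First I would invoke Lemma \ref{aut} to fix a K3 surface $X_1$ with $S_{X_1}\cong S_1$ carrying an involution $\sigma_1$ with $\sigma_1^* = \iota_1$. The key point is that $\iota_1$ acts as $+1$ on $S_1$ and $-1$ on $T_1$, so for a nowhere vanishing holomorphic $2$-form $\omega_{X_1}$ (which lies in $T_{X_1}\otimes\bbC \subset T_1\otimes \bbC$) we get $\sigma_1^*(\omega_{X_1}) = -\omega_{X_1}$. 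Hence the quotient $Y_1 = X_1/(\sigma_1)$ carries no nonzero holomorphic $2$-form, and by the classification of quotients of K3 surfaces by non-symplectic involutions (Nikulin, \cite[Theorem 4.2.2]{N2}), applied to the invariant $2$-elementary lattice $S_1$ with invariants $(s,\ell,\delta)$ read off as in Lemma \ref{ret}, the fixed locus of $\sigma_1$ is a smooth curve and $Y_1$ is a smooth rational surface.

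Next I would identify $Y_1$ geometrically. Since $S_1 \cong A_1(-1)\oplus A_1^5$ has signature $(1,5)$ and rank $6$, the pushforward-level Picard lattice of $Y_1$ has rank $6$ and signature $(1,5)$, the same numerical data as the Picard lattice of a quartic del Pezzo surface. The class $h$ in $S_1$ with $h^2=2$ (the generator of the $A_1(-1)$ summand, which descends from $-K_{Y_1}$ up to the double-cover normalization exactly as $\bar e_0$ did in \ref{geo}) gives, via $|-K_{Y_1}|$ or rather via the appropriate half of the pullback, the anticanonical map, with $(-K_{Y_1})^2 = 4$. One then checks $-K_{Y_1}$ is nef and big: it is big by the self-intersection count, and nef because the double cover sees no $(-2)$-curve orthogonality that would force a negative anticanonical degree — more precisely, the $(-2)$-vectors of $S_1$ are the $s_i$, $i=1,\dots,5$, and the argument of Lemma \ref{-2} together with the choice of a generic point $\omega \in \mathcal D_1 \setminus (\text{discriminant})$ ensures the $(-2)$-curves on $X_1$ are all $\sigma_1$-invariant and map to the exceptional configuration. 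Thus $Y_1$ is a (possibly nodal) quartic del Pezzo surface, and the branch curve $C_1$ lies in $|-2K_{Y_1}|$, i.e. $C_1$ is a plane sextic with five nodes after blowing down five exceptional curves.

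Then I would produce the quintic del Pezzo picture by blowing down only four of the five exceptional $(-1)$-curves on $Y_1$: this yields a quintic del Pezzo surface $Y$ on which the image of $C_1$ is a curve of genus six acquiring exactly one node, namely the image of the one exceptional curve not blown down. Equivalently, on $\bbP^2$ we see a sextic with five nodes, four of which are resolved to build $Y$ and the fifth remains as the node of the genus-six curve. The statement "generic" is handled by noting that the period of $X_1$ can be taken to be a very general point of $\mathcal D_1 \setminus (\text{its own discriminant})$, so that $S_{X_1}$ equals $S_1$ on the nose (no extra algebraic classes) and $C_1$ is a genuinely generic sextic with five nodes — as was already observed in the paragraph preceding the Proposition, where the forward direction (quintic del Pezzo double cover of a generic nodal genus-six curve produces a K3 with $S_1 \subset S_X$) was checked directly.

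The main obstacle will be the geometric identification step: verifying that $Y_1 = X_1/(\sigma_1)$ is precisely a quartic del Pezzo surface (not merely a smooth rational surface with the right Picard rank) and that the fixed curve $C_1$ has exactly five nodes under the blow-down, as opposed to a configuration with a triple point or infinitely near points. This requires controlling the ample cone of $X_1$ and the $\sigma_1$-orbits of $(-2)$-curves, which is exactly where genericity of the period point is used. I expect that, just as in the proof of Theorem \ref{bir}, the verification reduces to: (i) $\iota_1$ preserves an ample class because $\omega$ avoids $\mathcal H_r$ for all $(-2)$-vectors $r\in T_1$, hence by the Torelli theorem \cite[Theorem 3.10]{Na} $\iota_1$ is geometric; and (ii) the induced polarization data matches a nodal quartic del Pezzo, which follows from $-K_{Y_1}$ nef and big with $K_{Y_1}^2 = 4$. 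Once these are in place, the correspondence with a double plane branched along a generic five-nodal sextic is immediate by blowing down.
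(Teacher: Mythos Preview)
Your approach is essentially the paper's: invoke Lemma \ref{aut}, apply Nikulin's classification \cite[Theorem 4.2.2]{N2} to identify the fixed locus of $\sigma_1$ (the paper reads off directly that it is a smooth curve $C_1$ of genus $5$) and conclude that $Y_1 = X_1/(\sigma_1)$ is a quartic del Pezzo since $C_1 \in |-2K_{Y_1}|$, then contract $(-1)$-curves to reach the quintic del Pezzo and plane-sextic models. One small slip in your write-up: the generator $s_0$ of the $A_1(-1)$ summand is the pullback of the hyperplane class $e_0$, not of $-K_{Y_1}$; the anticanonical class pulls back to $3s_0 - \sum_{i=1}^5 s_i$, whose square $8 = 2\cdot 4$ is what gives $K_{Y_1}^2 = 4$.
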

\proof
Consider the involution $\sigma_1$ on $X_1$ as in Lemma \ref{aut}. By  \cite[Theorem 4.2.2]{N2} the  fixed locus of $\sigma_1$ is a smooth curve $C_1$ of genus $5$. The quotient of $X_1$ by $\sigma_1$ is a smooth rational surface $Y_1$ and the image of $C_1$ belongs to $|-2K_{Y_1}|$, hence $Y_1$ is a del Pezzo surface of degree $4$.

Any $(-1)$-curve $e$ on $Y_1$ intersects the image of $C_1$ at two points since $(-K_{Y_1},e)=1$.  
Hence, contracting one $(-1)$-curve of $Y_1$ we get a quintic del Pezzo surface where the image of $C_1$ is a curve of genus six 
with a node, and contracting five disjoint $(-1)$-curves $C_1$ is mapped to a plane sextic with five nodes.   
 \qed
 
\begin{corollary}\label{h1c}
The divisor $\mathcal H_1$ is birational to the moduli space of curves of genus six with one node and 
$\mathcal H^*_1$  to the moduli space of plane sextics with $5$ nodes, with one marked. 
\end{corollary}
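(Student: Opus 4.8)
The plan is to globalize Proposition~\ref{h1} by means of the period-theoretic dictionary used in the proof of Theorem~\ref{bir}, namely surjectivity of the period map together with the Torelli theorem for K3 surfaces.

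First I would record the moduli interpretation of $\mathcal H_1$. Since $\Gamma$ acts transitively on $\Delta_1$ (Lemma~\ref{orbit}), $\mathcal H_1$ is the image in $\mathcal M=\mathcal D/\Gamma$ of a single hyperplane section $\mathcal H_r$, $r\in\Delta_1$; letting $\Gamma_1$ be the image in $\O(T_1)$ of $\{\gamma\in\Gamma:\gamma(T_1)=T_1\}$, we get $\mathcal H_1\cong\mathcal H_r/\Gamma_1$, which is irreducible of dimension $14$ because $T_1\cong U\oplus U\oplus E_7\oplus A_1^5$ has signature $(2,14)$.

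Next I would construct the two rational maps and check that they are inverse. A very general $\omega\in\mathcal H_r$ is orthogonal to no $(-2)$-vector of $T_1$ and to no vector of $T\setminus T_1$, so as in the proof of Theorem~\ref{bir} it is the period of a marked K3 surface with transcendental lattice $T_1$, Picard lattice $S_1$, and carrying the involution $\sigma_1$ of Lemma~\ref{aut}; this is the surface $X_1$, and by Proposition~\ref{h1} it is the double cover of a quintic del Pezzo surface branched along a one-nodal curve $C$ of genus six, equivalently the double plane branched along a plane sextic $B$ with five nodes. Conversely, from a generic such $B$ --- equivalently from a generic one-nodal genus six curve together with one of its five plane models --- the canonical resolution $X$ of the double plane branched along $B$ contains a copy of $S_1$ inside $S_X$, namely the pull-back of the Picard group of the quartic del Pezzo obtained by blowing up the five nodes of $B$; this is exactly the construction recalled at the start of Proposition~\ref{h1}. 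A dimension count shows that generically $S_X=S_1$: the family of five-nodal plane sextics modulo $\PGL_3$ has dimension $22-8=14=\dim\mathcal H_r$. Hence the period of $X$ lies in $\mathcal H_r$ off the hyperplane sections coming from the other $(-2)$-vectors, and by Lemma~\ref{su}, as in Theorem~\ref{bir}, the marking is unique up to $\Gamma_1$. By the Torelli theorem the two constructions are mutually inverse on the generic locus, which yields the birationality of $\mathcal H_1$ with the moduli space of one-nodal curves of genus six.

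For $\mathcal H_1^*$ I would repeat the argument with $\Gamma^*$ in place of $\Gamma$: $\Delta_1$ remains a single $\Gamma^*$-orbit (Lemma~\ref{orbit}), so $\mathcal H_1^*\cong\mathcal H_r/\Gamma_1^*$ with $\Gamma_1^*=\Gamma_1\cap\Gamma^*$. By Lemma~\ref{su}, passing from $\mathcal M$ to $\mathcal M^*$ amounts to remembering the class $s_0$, hence on the del Pezzo side a blowing-down structure of the quintic del Pezzo surface carrying $C$, i.e.\ a $g^2_6$ on $C$; under the associated plane model this is exactly a distinguished node of the five-nodal sextic $B$, namely the image of the node of $C$. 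Forgetting this node recovers the degree~$5$ cover $\mathcal H_1^*\to\mathcal H_1$ obtained above, so $\mathcal H_1^*$ is birational to the moduli space of plane sextics with five nodes, one of them marked.

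The step I expect to be the main obstacle is the genericity input: showing that the K3 surface attached to a generic five-nodal sextic has Picard lattice exactly $S_1$, so that its period genuinely lands in the open part of $\mathcal H_r$ and the two constructions are honestly inverse rather than merely dominant in one direction. This is where the dimension count and Proposition~\ref{h1} are used together; everything else is a transcription of the proof of Theorem~\ref{bir}.
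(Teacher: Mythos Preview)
Your proposal is correct and follows essentially the same route as the paper: use Proposition~\ref{h1} to translate a generic point of $\mathcal H_r$ into a double cover of a quartic del Pezzo, and then argue as in Theorem~\ref{bir}. The paper's proof is terser---it simply observes that modulo $\Gamma$ one retains the embedding $\alpha^{-1}(S)\hookrightarrow\Pic(X_1)$, hence a distinguished $(-1)$-curve on $Y_1$ to contract, and that modulo $\Gamma^*$ one additionally retains $\alpha^{-1}(s_0)$, hence a blowing-down map on $Y_1$ together with that distinguished exceptional divisor---but this is exactly what you unpack. Your phrasing for $\mathcal H_1^*$ (a blowing-down structure on the \emph{quintic} del Pezzo $Y$, with the marked node being the image of the node of $C$) and the paper's (a blowing-down map on the \emph{quartic} del Pezzo $Y_1$ with one exceptional divisor singled out) are equivalent, since the contraction $Y_1\to Y$ of that divisor commutes with the map to $\bbP^2$ determined by $s_0$. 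The explicit dimension count you supply for the genericity step is a reasonable addition; the paper leaves this implicit in ``as in the proof of Theorem~\ref{bir}''.
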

\proof  Taking the quotient of $\mathcal H_1$ for the action of $\Gamma$, we identify two markings on $X_1$ which give the same embedding of $\alpha^{-1}(S)$ in $\Pic(X_1)$. This data identifies a $(-1)$-curve on $Y_1$, whose contraction gives a quintic Del Pezzo surface and a curve of genus 6 with a node.
The group $\Gamma^*$, instead, identifies two markings on $X_1$ if they also give the same embedding of $\alpha^{-1}(h)$ in the Picard lattice. This class gives a blowing down map on $Y_1$ with a distinguished exceptional divisor. 

Using these remarks and Proposition \ref{h1}, the result follows as in the proof of Theorem \ref{bir}.\qed
 \subsection{Plane quintics}
Let $C_2$ be a smooth plane quintic and let $L$ be a line transversal to $C_2$. The minimal resolution of the double plane branched along $C_2\cup L$ is a K3 surface $X$. The Picard lattice of $X$ contains  five 
disjoint $(-2)$-curves, coming from the resolution of singularities, and a $(-2)$-curve which is the proper transform of $L$. These rational curves generate a lattice which is isomorphic to $S_2$.

\begin{proposition}\label{h2}\
The surface $X_2$ is birational to a double plane branched along the union of a plane quintic and a line. 
\end{proposition}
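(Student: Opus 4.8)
The plan is to follow the argument of Proposition \ref{h1}: realize $X_2$ as a double cover of the quotient surface $Y_2=X_2/(\sigma_2)$, determine this quotient, and blow it down to $\bbP^2$. First I would record the invariants of the involution. By Lemma \ref{aut} the surface $X_2$ carries an involution $\sigma_2$ with $\sigma_2^{*}=\iota_2$, so its invariant lattice is $S_2\cong U(2)\oplus D_4$; since $\iota_2=-1$ on $T_2$ and $\omega_{X_2}\in T_2\otimes\bbC$, the involution is non-symplectic. The lattice $S_2$ is $2$-elementary of signature $(1,5)$ with discriminant group $\bbF_2^{4}$, so in the notation of \cite[Theorem 3.6.2]{N1} it has invariants $(r,a,\delta)=(6,4,0)$. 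By \cite[Theorem 4.2.2]{N2} the fixed locus of $\sigma_2$ is therefore the disjoint union of a smooth curve $C_2$ of genus $\tfrac{1}{2}(22-r-a)=6$ and of $\tfrac{1}{2}(r-a)=1$ smooth rational curve $E$; on $X_2$ one thus has $C_2^{2}=10$, $E^{2}=-2$ and $C_2\cdot E=0$.

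Next I would pass to $Y_2=X_2/(\sigma_2)$. As the fixed locus is a smooth divisor, $Y_2$ is a smooth surface, and as $\sigma_2$ is non-symplectic with non-empty fixed locus, $Y_2$ is rational; by Lemma \ref{aut} one has $\Pic(X_2)\cong S_2$, and since $\sigma_2^{*}$ acts trivially on it, the Picard number of $Y_2$ equals the rank of $S_2$, that is $6$. The branch divisor of $\pi\colon X_2\to Y_2$ is $\bar C_2+\bar E\in|-2K_{Y_2}|$, where $\bar C_2,\bar E$ denote the images of $C_2,E$ (each isomorphic to its preimage). From $\pi^{*}\bar C_2=2C_2$, $\pi^{*}\bar E=2E$ and the projection formula one computes $\bar C_2^{2}=20$, $\bar E^{2}=-4$ and $\bar C_2\cdot\bar E=0$, whence $4K_{Y_2}^{2}=(\bar C_2+\bar E)^{2}=16$, i.e.\ $K_{Y_2}^{2}=4$.

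Then I would identify $Y_2$. A smooth rational surface with Picard number $6$ and $K^{2}=4$ can be obtained from $\bbP^2$ by blowing up five points, and one can choose such a blow-down $Y_2\to\bbP^2$ so that the smooth rational $(-4)$-curve $\bar E$ is represented by $e_0-e_1-\dots-e_5$ (the only $(-4)$-class of arithmetic genus $0$ with non-negative multiplicities); hence $Y_2$ is the blow-up of $\bbP^2$ at five points $p_1,\dots,p_5$ lying on a line $\ell$, with $\bar E$ the proper transform of $\ell$. The curve $\bar C_2$ lies in $|-2K_{Y_2}-\bar E|=|5e_0-e_1-\dots-e_5|$, so it is the proper transform of a plane quintic $Q$ through $p_1,\dots,p_5$; since $C_2$ is smooth and $\bar C_2\cdot\bar E=0$, the quintic $Q$ is smooth and meets $\ell$ transversally exactly in $p_1,\dots,p_5$. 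Blowing down the five exceptional curves then exhibits $X_2$ as the minimal resolution of the double cover of $\bbP^2$ branched along the sextic $Q\cup\ell$, which is the assertion.

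I expect the identification of $Y_2$ to be the main obstacle. Unlike the surface $Y_1$ of Proposition \ref{h1}, $Y_2$ is \emph{not} a (weak) del Pezzo surface --- indeed $-K_{Y_2}\cdot\bar E=-2<0$ because of the $(-4)$-curve $\bar E$ --- so one cannot simply invoke del Pezzo $(-1)$-curve theory to produce the morphism $Y_2\to\bbP^2$ sending $\bar E$ to a line. One way around this is to argue directly inside $\Pic(Y_2)$: it suffices to produce five disjoint $(-1)$-curves each meeting $\bar E$ in one point (equivalently, to pull back to $X_2$ the configuration of smooth rational curves realizing the dual graph of $S_2$, whose existence follows from Riemann--Roch on the $K3$ surface $X_2$ together with Lemma \ref{-2}), and then to check that the resulting class $e_0$ satisfies $e_0^{2}=1$, $e_0\cdot K_{Y_2}=-3$ and is base-point free. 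Alternatively, one can bypass the computation entirely: the $K3$ surface built from a general pair (smooth plane quintic, transversal line) in the paragraph preceding the statement has Picard lattice isomorphic to $S_2$ and a non-symplectic involution with the same invariant lattice, hence by the Torelli theorem (as in the proof of Theorem \ref{bir}) it is isomorphic to $X_2$, giving the conclusion at once.
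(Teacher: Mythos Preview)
Your argument is correct and far more detailed than what the paper actually does: the paper's own proof consists of a single sentence deferring to \cite[Ch.~6]{L}. What you have written is essentially the argument one finds there (and the one the paper gives in the parallel Propositions~\ref{h1} and~\ref{h3}): read off the invariants $(r,a,\delta)=(6,4,0)$ of $S_2\cong U(2)\oplus D_4$, apply \cite[Theorem~4.2.2]{N2} to get $\mathrm{Fix}(\sigma_2)=C_2\sqcup E$ with $g(C_2)=6$, pass to the quotient $Y_2$, and blow down to $\bbP^2$. Your numerical computations ($\bar C_2^2=20$, $\bar E^2=-4$, $K_{Y_2}^2=4$) are all correct.

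You are also right that the identification of $Y_2$ is the one step requiring care, since $-K_{Y_2}$ is not nef and the del~Pezzo argument of Proposition~\ref{h1} is unavailable. Your parenthetical (``the only $(-4)$-class of arithmetic genus $0$ with non-negative multiplicities'') is a little glib---it presupposes a basis $e_0,e_1,\dots,e_5$ coming from an honest blow-down with the $e_i$ effective, which is exactly what has to be produced---but both of your proposed fixes work. For approach~(a), the five disjoint $(-2)$-curves on $X_2$ meeting $E$ once are precisely the classes displayed in the paragraph before the statement (the ``star'' with centre $E$ and five leaves $N_i$, which does generate $U(2)\oplus D_4$); since $S_{X_2}=S_2$ these classes are effective and irreducible, and their images in $Y_2$ are the required five disjoint $(-1)$-curves. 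Approach~(b) via Torelli is equally valid and is in effect how the paper (following \cite{L}) obtains Corollary~\ref{quintic}. Either way your proof stands; the paper simply chose to outsource it.
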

\proof
This was proved in \cite[Ch.6]{L}. \qed
\begin{corollary}\label{quintic}
 The divisor $\mathcal H_2$ is birational to the moduli space of pairs $(C,L)$ where $C$ is a plane quintic and $L$ is a line, while
$\mathcal H^*_2$ parametrizes triples $(C,L,p)$ where $p\in C\cap L$.\end{corollary}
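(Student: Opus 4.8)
The plan is to argue exactly as in the proofs of Theorem~\ref{bir} and Corollary~\ref{h1c}, now running the period argument along the hyperplane section $\mathcal{H}_r$ with $r\in\Delta_2$ and using Proposition~\ref{h2} in place of Proposition~\ref{h1}. First I would note that a generic point of $\mathcal{H}_2$ is the period point of a K3 surface $X_2$ as in Lemma~\ref{aut}, with Picard lattice exactly $S_2\cong U(2)\oplus D_4$ and carrying the involution $\sigma_2$ with $\sigma_2^{*}=\iota_2$; by Proposition~\ref{h2} such an $X_2$ is birational to a double plane branched along $C\cup L$, with $C$ a smooth plane quintic and $L$ a transversal line, i.e.\ to the K3 surface obtained by resolving that double cover, and the five disjoint $(-2)$-curves lying over the points of $C\cap L$ together with the proper transform of $L$ span the distinguished copy of $S_2\supset S$ inside $\Pic(X_2)$. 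Conversely every pair $(C,L)$ produces, via the resolved double plane, a K3 surface of this type, and the surjectivity theorem for the period map together with the Torelli theorem show, as in Theorem~\ref{bir}, that this geometric construction inverts the period map up to birational equivalence. A dimension count is consistent with this: $\dim\mathcal{H}_r=\operatorname{rk}(T_2)-2=14$ equals the dimension $20-8+2$ of the moduli space of pairs $(C,L)$ (quintics form a $\bbP^{20}$, lines a $\bbP^2$, modulo $\PGL_3$).

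Next I would track the three arithmetic groups. Passing to the quotient by $\Gamma$ remembers the marking only through the embedded sublattice $\alpha^{-1}(S_2)\subset\Pic(X_2)$; as in Theorem~\ref{bir} this is precisely what is needed to reconstruct $(X_2,\sigma_2)$, hence the pair $(C,L)$, up to isomorphism, so $\mathcal{H}_2$ is birational to the moduli space of such pairs. The group $\Gamma/\Gamma^{**}\cong S_5$ acts on the image of $\mathcal{H}_r$ in $\mathcal{M}^{**}$ with quotient $\mathcal{H}_2$, and on this locus its action restricts to the permutation action on the five points of $C\cap L$ --- the five nodes of the branch curve $C\cup L$, equivalently the five $g_6^2$ of $C$ of the form $g_5^2+p$, $p\in C\cap L$, that arise as limits of the five $g_6^2$ of a nearby general curve. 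Since $\Gamma^{*}/\Gamma^{**}\cong S_4$ is a point-stabilizer in this $S_5$, taking the quotient for $\Gamma^{*}$ distinguishes one point of $C\cap L$; equivalently, because $\Delta_2$ is a single orbit for both $\Gamma$ and $\Gamma^{*}$ (Lemma~\ref{orbit}), an orbit--stabilizer count shows that the stabilizer of $r$ in $\Gamma$ contains the stabilizer in $\Gamma^{*}$ with index $[\Gamma:\Gamma^{*}]=5$, so the covering $\mathcal{H}^{*}_2\to\mathcal{H}_2$ has degree $5$ (as already stated above), in agreement with $\#(C\cap L)=5$. Hence $\mathcal{H}^{*}_2$ is birational to the moduli space of triples $(C,L,p)$ with $p\in C\cap L$.

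The step I expect to be the main obstacle is the one just used: identifying precisely which structure on a generic pair $(C,L)$ is encoded by a $\Gamma^{*}$-marking as opposed to a $\Gamma$-marking, i.e.\ checking that the $S_5=\Gamma/\Gamma^{**}$-action on the five blowing-down classes of $S$ specializes, along $\mathcal{H}_2$, to the permutation action on the five nodes of $C\cup L$ rather than to some other distinguished quintuple of classes of $S_2$. Granted Proposition~\ref{h2}, this is essentially forced, since $C\cap L$ is the only natural five-element set attached to $(C,L)$; it can be pinned down rigorously by degenerating a general curve of genus six, together with its five $g_6^2$, to a plane quintic, exactly in the spirit of the proof of Theorem~\ref{bir}. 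The remaining verifications --- that the resolved double plane has transcendental lattice $T_2$, and that $\Pic(X_2)=S_2$ for a generic member (so that its period lies on no $\mathcal{H}_{r'}$ with $r'\ne\pm r$) --- are routine and entirely parallel to the genus-six case, so I would only indicate them.
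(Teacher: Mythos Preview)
Your proposal is correct and follows essentially the same approach as the paper: the first statement is obtained from Proposition~\ref{h2} via the period argument (the paper simply cites \cite[Corollary~6.21]{L} for this), and the second statement is deduced by tracking the extra datum remembered by a $\Gamma^{*}$-marking, exactly as in Corollary~\ref{h1c}. Your write-up is considerably more detailed than the paper's two-line proof, and you correctly flag the identification of the $\Gamma^{*}$-datum with a point of $C\cap L$ as the only step requiring care.
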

\proof 
The first statement is \cite[Corollary 6.21]{L}. 
The second statement can be proved similarly to Corollary \ref{h1c}. \qed
\subsection{Trigonal curves of genus six}
Let $C\subset \bbP^5$ be the canonical model of a trigonal curve of genus $6$. Any $3$ points in the $g_1^3$ lie on a line by Riemann-Roch theorem and the closure of the union of all these lines is a quadric $Q$ 
such that the curve $C$ belongs to $|4f+3e|$, where $e,f$ are the rulings of $Q$.
The minimal resolution of the double cover of $Q$ branched along the union of $C$ with a line $L\in |e|$  is a K3 surface $X$.
The ruling $f$, the proper transform of $L$ and the exceptional divisors over  the four points in $C\cap L$ generate a sublattice of $S_X$ isomorphic to $S_3$.

As before, we now prove a converse statement.

 \begin{proposition}\label{h3}\
The surface $X_3$ is birational to:
\begin{enumerate}[$\bullet$]
\item the double cover of a quadric $Q$ branched along a line and a trigonal curve of genus six.
\item the double cover of a Hirzebruch surface $\bbF_4$ branched along a curve with $4$ nodes in $|3h|$ and the rational curve in $|s|$, where $h^2=4,\ s^2=-4,\ (h,s)=0$.
\end{enumerate}
\end{proposition}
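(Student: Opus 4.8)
The plan is to follow the pattern of the proofs of Propositions~\ref{h1} and~\ref{h2}, working with the involution $\sigma_3$ on $X_3$ furnished by Lemma~\ref{aut}, which has $\sigma_3^*=\iota_3$ and invariant lattice $S_3\cong U\oplus A_1^4$. This lattice is $2$-elementary with invariants $(r,\ell,\delta)=(6,4,1)$, so by \cite[Theorem 4.2.2]{N2} the fixed locus of $\sigma_3$ is the disjoint union of a smooth curve $C$ of genus $(22-r-\ell)/2=6$ and $(r-\ell)/2=1$ smooth rational curve $E$. Hence $Y_3=X_3/\sigma_3$ is a smooth rational surface, and if we keep the names $C,E$ for the images of the fixed curves, the branch locus satisfies $C+E\in|-2K_{Y_3}|$. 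Writing $C',E'$ for the ramification curves on $X_3$, so that $\pi^*C=2C'$, $\pi^*E=2E'$ with $C'^2=10$, $E'^2=-2$, $C'\cdot E'=0$, one computes
$$C^2=20,\qquad E^2=-4,\qquad C\cdot E=0,\qquad 4K_{Y_3}^2=(C+E)^2=16 ,$$
so $K_{Y_3}^2=4$; in particular $-K_{Y_3}\cdot E=-2<0$, so $E$ is a $(-4)$-curve and $Y_3$ is not a del Pezzo surface.

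Next I would put a conic bundle structure on $Y_3$. A suitable primitive isotropic class in the $U$-summand of $S_3$, once made nef, defines an elliptic fibration $\varphi\colon X_3\to\bbP^1$ that is $\sigma_3$-equivariant (the class being $\sigma_3$-invariant) and admits $E'$ as a section; on a general fibre $\sigma_3$ is then an involution of an elliptic curve fixing the point where $E'$ meets it, so the quotient is $\bbP^1$ and $Y_3\to\bbP^1$ is a conic bundle with $E$ a section and $C$ a trisection. It has $8-K_{Y_3}^2=4$ singular fibres, each the union of two $(-1)$-curves: one, say $\epsilon_i$, disjoint from $E$, the other, $\gamma_i$, meeting $E$ once. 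Intersecting the relation $C+E\in|-2K_{Y_3}|$ with these $(-1)$-curves gives $C\cdot\epsilon_i=2$ and $C\cdot\gamma_i=1$.

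Then I would pass to relatively minimal models. Contracting the four disjoint $(-1)$-curves $\epsilon_1,\dots,\epsilon_4$ yields a Hirzebruch surface carrying the irreducible curve $E$ with $E^2=-4$; as the only irreducible curve of negative self-intersection on a Hirzebruch surface is the negative section, this surface is $\bbF_4$ with $E\mapsto s$, and $C$ maps to a member of $|3h|$ with a node over each contracted $\epsilon_i$ and disjoint from $s$ (since $3h\cdot s=0$). This is the second model. Contracting instead $\gamma_1,\dots,\gamma_4$ raises $E^2$ to $0$ and turns the $\epsilon_i$ into four disjoint fibres of the resulting ruling; the target is $\bbP^1\times\bbP^1$, on which $E$ becomes a ruling $L\in|e|$ and $C$ a curve in $|4f+3e|$ meeting $L$ in $(4f+3e)\cdot e=4$ points. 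As birational K3 surfaces are isomorphic, in either case $X_3$ is the minimal resolution of the double cover described.

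The step I expect to be the main obstacle is the identification of the second relatively minimal model: one must exclude the case $\bbF_2$, i.e.\ check that the four nodes of the image of $C$ on $\bbF_4$ lie in sufficiently general position (no two on one fibre, none on $s$), so that $Y_3$ is simultaneously a blow-up of $\bbF_4$ and of $\bbF_0=\bbP^1\times\bbP^1$ at four general points; for $X_3$ with generic transcendental period this genericity is automatic. One can also sidestep the surface bookkeeping, exactly as in the proof of Theorem~\ref{bir}: the K3 surface constructed in the paragraph preceding the statement from a generic trigonal curve of genus six and a transversal line has Picard lattice $S_3$ and an involution inducing $\iota_3$, hence coincides with $X_3$ by the uniqueness that the Torelli theorem provides.
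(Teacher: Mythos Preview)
Your argument is correct and follows essentially the same route as the paper's own proof: Nikulin's theorem gives the fixed locus $C\sqcup E$, the $U$ summand of $S_3$ yields an elliptic fibration on $X_3$ descending to a conic bundle on $Y_3$ with four reducible fibres, and contracting one or the other $(-1)$-component in each reducible fibre produces $\bbF_4$ or the smooth quadric. Your worry in the last paragraph is unnecessary: after contracting the $\gamma_i$, the image of $E$ is a \emph{section} of the resulting ruled surface with self-intersection $0$, and this alone forces $\bbF_0$ with no genericity hypothesis needed (the only cosmetic difference is that the paper reads the four reducible fibres directly from the $A_1^4$ summand on $X_3$, whereas you obtain them on $Y_3$ via the conic-bundle count $8-K_{Y_3}^2$).
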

\proof
By \cite[Theorem 4.2.2]{N2} the set of fixed points of $\sigma_3$ on $X_3$ is the disjoint union of a smooth curve $C$ of genus $6$ and a smooth rational curve $L$.
Since $S_3\subset  \Pic(X_3)$, $X_3$ admits an elliptic fibration $\pi$ with a section and four singular fibers of Kodaira type $I_2$
or type $III$. 
Since any fiber of $\pi$ is preserved by $\sigma_3$, then $L$ is a section of $\pi$ and $C$ intersects each fiber in $3$ points.
Hence $C$ has a triple cover to $\bbP^ 1$ and its ramification points are the singular points of irreducible fibers of $\pi$.

We will denote by $F_1,\dots, F_4$ the singular fibers of $\pi$ of type $I_2$ or $III$, by $E_i$ the component of $F_i$ meeting $L$ and by $E'_i$ the other component. 
Let $p:X_3\rightarrow Y_3$ be the quotient by the involution $\sigma_3$. Note that $p(E_i)$ and $p(E'_i)$ are $(-1)$-curves.

By contracting the curves $p(E_i)$, we get a smooth quadric surface. This gives the first assertion.

On the other hand, contracting the curves $p(E'_i)$, we get a Hirzebruch surface $\bbF_4$ (note that the image of $L$ has self-intersection $-4$). Since $C$ intersects the ruling in $3$ points, each $E'_i$ at two points and it does not intersect $L$, then its image in $\bbF_4$ has $4$ nodes and belongs to the class $3h$. This gives the second assertion.
\qed

 \begin{corollary}\ \label{trig}
\begin{enumerate}[$\bullet$]
\item  The divisor $\mathcal H_3$ is birational to the moduli space of pairs $(C,L)$ where $C$ is a trigonal curve of genus $6$ and $L\in |K_C-2g_3^1|$.
\item The divisor $\mathcal H^*_{3a}$ parametrizes pairs $(C,p)$ where $C$ is trigonal and $p\in C$ or, equivalently, plane sextics with a node and a triple point.
\item The divisor $\mathcal H^*_{3b}$ is birational to the moduli space of curves in $|3h|$ of $\bbF_4$ with $4$ nodes. 
\end{enumerate}
\end{corollary}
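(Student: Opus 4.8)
The plan is to derive all three statements from Proposition~\ref{h3} by the marking/Torelli argument already used in the proofs of Corollaries~\ref{h1c} and~\ref{quintic}, combined with the orbit decomposition of Lemma~\ref{orbit} and the degree count in the preceding Corollary on the degrees of the covers $\mathcal{H}^*_i\to\mathcal{H}_i$.

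For the first bullet I start from the first model in Proposition~\ref{h3}: $X_3$ is birational to the double cover of a quadric $Q\cong\bbP^1\times\bbP^1$ branched along $L\cup C$ with $L\in|e|$ and $C\in|4f+3e|$. Adjunction gives $K_C=(K_Q+C)|_C=(2f+e)|_C$, while the trigonal pencil is $g_3^1=f|_C$; hence $K_C-2g_3^1=e|_C$, so $L$ cuts out on $C$ an effective divisor in the $g_4^1$ $|K_C-2g_3^1|$ (indeed $h^0(K_C-2g_3^1)=h^1(2g_3^1)=2$), and restriction identifies $|e|$ on $Q$ with $|K_C-2g_3^1|$ on $C$. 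Conversely a general trigonal $C$ of genus six determines $Q$ (the quadric swept out by its trisecant lines) and then $L$ is recovered from $L|_C$. As in Corollary~\ref{h1c}, the $\Gamma$-quotient records the embedding of $\alpha^{-1}(S_3)$ in $\Pic(X_3)$, which pins down the elliptic fibration $\pi$ together with its section, hence determines the four $(-1)$-curves $p(E_i)$ to contract, and so recovers $(Q,L,C)$ up to isomorphism. This gives the birational identification of $\mathcal{H}_3$ with the moduli of pairs $(C,L)$ with $C$ trigonal of genus six and $L\in|K_C-2g_3^1|$.

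A point of $\mathcal{M}^*=\mathcal{D}/\Gamma^*$ records in addition the embedding of $\alpha^{-1}(s_0)$, i.e. a distinguished $g_6^2$ on $C$, so $\mathcal{M}^*$ is birational to $\mathcal{W}_6^2$ and $\mathcal{H}^*_{3a},\mathcal{H}^*_{3b}$ are the two preimages of $\mathcal{H}_3$ cut out by $\Delta_{3a},\Delta_{3b}$. For $r\in\Delta_{3a}$ one has $r/2=\xi_2$, corresponding under $A_S\cong A_T$ to one of the four blowing-down classes $2e_0-\sum e_i+e_j$; tracing this through the construction, the distinguished $g_6^2$ is of type $D(p)=K_C-g_3^1-p$, whose plane model is by Proposition~\ref{spec} a sextic with a triple point and a node, and $p$ is recovered from the linear equivalence $p\sim K_C-g_3^1-D(p)$. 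This identifies $\mathcal{H}^*_{3a}$ with $\{(C,p):C\text{ trigonal of genus }6,\ p\in C\}$ and with the moduli of plane sextics with a node and a triple point; the forgetful map to $\mathcal{H}_3$ sends $(C,p)$ to the fibre through $p$ of the degree-$4$ map $C\to\bbP^1$ attached to the $g_4^1$ $|K_C-2g_3^1|$, which is $4$-to-$1$, matching the preceding Corollary. For $r\in\Delta_{3b}$ instead $r/2=\xi_1=s_0/2$, the class fixed by $\Gamma^*$ itself, and here the natural model is the second one of Proposition~\ref{h3}: $X_3$ is birational to the double cover of $\bbF_4$ branched along $\Sigma\cup C'$ with $\Sigma\in|s|$, $\Sigma^2=-4$, and $C'\in|3h|$ four-nodal. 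Recording $\alpha^{-1}(S_3)$ together with $\alpha^{-1}(s_0)$ recovers $(\bbF_4,\Sigma,C')$, hence $C'$, up to isomorphism, so $\mathcal{H}^*_{3b}$ is birational to the moduli of four-nodal curves in $|3h|$ on $\bbF_4$ and, requiring no choice beyond the data in $\mathcal{H}_3$, maps to $\mathcal{H}_3$ birationally.

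The main obstacle will be matching the two $\Gamma^*$-orbits $\Delta_{3a},\Delta_{3b}$ with respectively the quadric model and the $\bbF_4$ model of Proposition~\ref{h3}, equivalently with the two components $\{D(p):p\in C\}$ and $\{2g_3^1\}$ of $W_6^2(C)$ for a trigonal $C$ — that is, checking that the extra marking datum separating $\Delta_{3a}$ from $\Delta_{3b}$ corresponds exactly to choosing the distinguished $g_6^2$ in one component rather than the other — together with verifying that this yields the degrees $4$ and $1$. Once that correspondence is fixed, the adjunction computation and the birational identifications above are routine, in the same spirit as Corollaries~\ref{h1c} and~\ref{quintic}.
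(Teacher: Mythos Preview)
Your first bullet is identical to the paper's: both compute $K_C-2g_3^1=e|_C$ by adjunction on the quadric and conclude via the usual marking/Torelli argument.

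For the second and third bullets your strategy is the same as the paper's --- use the isomorphism $A_S\cong A_T$ to decide which $\Gamma^*$-orbit goes with which geometric model --- but the execution differs. You argue through the $W_6^2(C)$ dichotomy of Proposition~\ref{spec} (the $D(p)$ versus $2g_3^1$ components) and leave the actual matching as the acknowledged ``main obstacle.'' The paper instead resolves this obstacle by an explicit lattice computation inside $S_3\cong U\oplus A_1^4$: normalizing $r=e-f$, it lists the two $\O(S)$-types of degree-two classes in $r^{\perp}\cong S$, namely $h=e+f$ and $h_j=2(e+f)-\sum e_i+e_j$, checks that $h/2=r/2$ in $A_T\cong A_S$ (your $\xi_1\leftrightarrow s_0/2$) forces $h_b=e+f$, observes $h_b=r+2f$ so $r$ is a fixed component and $|2h_b|$ maps to the cone over a rational normal quartic (the $\bbF_4$ model), while $h_a=h_j$ is base-point-free and its double-plane branch sextic has a triple point (image of $r$) and a node (image of $e_j$), with $p$ read off as the residual intersection with the line through the two singular points. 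So the paper's route is a direct linear-system analysis on the K3, whereas yours is a moduli-theoretic sketch via $\mathcal W_6^2$; both are valid, but the step you flag as the obstacle is exactly where the paper does the work, and your ``tracing this through the construction'' would in practice become the paper's computation.
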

\proof 
The first statement follows from Proposition \ref{h3} and the remarks at the beginning of this subsection since, by adjunction formula, the restriction of  $e$ to $C$ coincides with $K_{C}-2g_3^1$.

Given a  trigonal curve $C\subset Q$ of genus six and $p\in C$, there exists a unique line $L\in |e|$ through $p$. 
This determines a K3 surface $X$ with $S_3\subset S_X$ as before.
Moreover, the projection of $C$ from $p$ is a plane sextic with a triple point and a double point. The hyperplane class of $\bbP^2$ 
induces the linear system $K_{C}-g_3^1-p$ on $C$ and its pull-back 
to $X$ is a nef class $h$ with $h^2=2$.

Conversely, a generic point in $\mathcal H^*_{3a}\cup \mathcal H^*_{3b}$ gives a K3 surface $X$ with $S_X\cong S_3=U\oplus A_1^{4}$ and a degree two polarization $h$. 
Let $e,f$ be a basis of $U$ and $e_1,\dots,e_4$ an orthogonal basis of $A_1^{4}$. Up to an isometry of $S_3$ we can assume that $r=e-f$ and that $f$ gives an elliptic fibration on $X$. 
The orthogonal complement $S_3\cap r^{\perp}\cong S$ has two types of degree two polarizations: $h_j=2(e+f)-\sum_{i=1}^4 e_i+e_j$ or $h=e+f$.

A point in $\mathcal H^*_{3b}$ gives a polarization $h_b$ such that $h_b/2=r/2$ in $A_{T}\cong A_S$, hence $h_b=h$. The class $h_b$ contains $r$ in the base locus and  $2h_b$ maps $X$ onto a cone over a rational normal quartic. In fact, the morphism associated to $2h_b$ is exactly the contraction of the curves $p(E'_i)$ and the image of the curve $L$ described in the proof of Proposition \ref{h3}.

A point in $\mathcal H^*_{3a}$ gives a polarization $h_a=h_j$ for some $j=1,\dots,4$. In this case $h_a$ has no base locus and gives a generically 2:1  map  $X\rightarrow \bbP^2$. 
The branch locus of this map is a plane sextic with a triple point (in the image of $r$) and a node (in the image of $e_j$). The line through the two singular points intersects the sextic in one more point $p$. Hence this gives a pair $(C,p)$, where $C$ is trigonal and $p\in C$.\qed

\begin{remark}
The two irreducible components in  $\mathcal M^*$ over $\mathcal H_3$    correspond to the components in $\mathcal W_6^2$  over the trigonal divisor in $\mathcal M_6$. With the notation in the proof of Proposition \ref{spec}: the divisor $\mathcal H^*_{3a}$ corresponds to pairs $(C, D(p))$
and $\mathcal H^*_{3b}$ to $(C,2g_3^1)$. 
This agrees with \cite{Sh}, where it is proved that the triple conic, which is the plane model of $C$ associated to $2g_3^1$ (Proposition \ref{spec}), ``represents'' K3 surfaces with a degree two polarization with a fixed component.
 \end{remark}

\begin{remark}
Let $C$ be a plane sextic with four nodes $p_1,\dots,p_4$ such that $p_1,p_2,p_3$ lie on a line $L$. The blowing up of the plane in these points is a nodal del Pezzo surface $Y$ (see section 1) and the double cover of $Y$ branched along the proper transform of $C$ is a K3 surface $X$. 
The pencil of lines through $p_4$ induces an elliptic fibration on $X$ with general fiber $f$, $3$ fibers of type $I_2$ and
two sections $s_1, s_2$, given by the two (disjoint) inverse images of the line $L$.
In particular, the Picard lattice of $X$ contains the sublattice
$S'=U\oplus A_1^{ 3}\oplus <-4>$, 
where $U$ is generated by the fiber $f$ and $s_1$, $A_1^{ 3}$ by the reducible components in each fiber and $<-4>$ by $2f+s_1-s_2$.

Conversely, let $r\in T$ be a primitive vector with $r^2=-4$ such that
 $r/2\in A_T$, then its orthogonal complement in $T$ is isomorphic to
$T'=U\oplus U\oplus E_8\oplus A_1^{ 3 }\oplus <-4>$ and $T'^{\perp}\cong S'$.

By choosing a different blow-down map for $Y$ we get a plane sextic with a tacnode and two nodes. In fact, the elliptic fibration described above is induced by the pencil of lines through the tacnode.
\end{remark}

\section{Compactifications}
\subsection{Satake-Baily-Borel compactification}
The  moduli spaces $ \mathcal M$, $\mathcal M^*$  are quasi-projective algebraic varieties. Since they are arithmetic quotients of a symmetric bounded domain, we can consider their Satake-Baily-Borel (SBB) compactifications $\overline{\mathcal M}$ and $\overline{\mathcal M^*}$ (see \cite{BB} and \cite{Sc},\S\,2). 
  
It is known that boundary components of the SBB compactification are in bijection with primitive isotropic sublattices of $T$ up to $\Gamma$ and $\Gamma^*$ respectively, such that $k$-dimensional boundary components correspond to rank $k+1$ isotropic sublattices. Since $T$ has signature $(2,15)$, the boundary components will be either $0$ or $1$ dimensional.

\begin{lemma}\label{iso1}
Let $\mathcal I$ be the set of primitive isotropic vectors in $ T$. There are two orbits in $\mathcal I$ with respect to the action of $\Gamma:$
$$\mathcal I_1=\{v \in \mathcal I:\  (v,T)=\mathbb Z\}\hspace{0.5in} 
\mathcal I_2=\{v\in \mathcal I:\ (v,T)=2\mathbb Z\} .$$
There are three orbits with respect to $\Gamma^* :$ $\mathcal I_1$ and  two orbits decomposing $\mathcal I_2$.
\end{lemma}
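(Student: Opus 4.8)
The plan is to reduce everything to the action of $\O(q_T)\cong S_5$ on the isotropic elements of the discriminant group, using Eichler's criterion to control the $\tilde\O(T)$-orbits of primitive isotropic vectors.

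I would attach to a primitive isotropic $v\in T$ its divisor $d=\operatorname{div}(v)$ (the positive generator of $(v,T)\subseteq\bbZ$) and the class $v_*=v/d+T\in A_T$; since the form vanishes on $v$, $q_T(v_*)=v^2/d^2=0$ in $\bbQ/2\bbZ$, so by Lemma~\ref{disc} $v_*$ is one of $0,\ \sum_{i\ne j}\xi_i\ (1\le j\le 5)$. The divisor is then forced: if $v_*=0$ then $v/d\in T$, so primitivity gives $d=1$; if $v_*\ne0$ then $v_*$ has order $2$ (as $A_T$ is $2$-elementary), hence $d$ is even and $v/(d/2)=2(v/d)\in T$, whence primitivity gives $d=2$. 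Thus $\operatorname{div}(v)\in\{1,2\}$, and $\mathcal I_1,\mathcal I_2$ are exactly the primitive isotropic vectors with $v_*=0$, resp.\ $v_*\ne0$; I would also exhibit one vector in each (a standard isotropic generator of a copy of $U$ for $\mathcal I_1$; the vector $2e+2f+r_1+r_2+r_3+r_4$, with $e,f$ a basis of one copy of $U$, for $\mathcal I_2$) to see both are nonempty.

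Since $T\cong U^{\oplus2}\oplus E_8\oplus A_1^5$ contains two orthogonal hyperbolic planes, Eichler's criterion (see \cite{Sc}) says that the $\tilde\O(T)$-orbit of a primitive vector is determined by $(v^2,v_*)$; hence $v\mapsto v_*$ gives a bijection between $\tilde\O(T)$-orbits of primitive isotropic vectors and isotropic elements of $A_T$. By Lemma~\ref{iso} the map $\O(T)\to\O(q_T)\cong S_5$ is onto and $\O(q_T)$ permutes the $\xi_i$, so its orbits on isotropic elements are $\{0\}$ and $\{\sum_{i\ne j}\xi_i\}_{j}$; therefore $\Gamma=\O(T)$ has precisely the two orbits $\mathcal I_1,\mathcal I_2$. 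For $\Gamma^*$ I would observe that $\tilde\O(T)=\Gamma^{**}\subseteq\Gamma^*$ and that the image of $\Gamma^*$ in $\O(q_T)$ is the stabiliser of $\xi_1$, i.e.\ $S_4$ (surjectivity onto this $S_4$ again by the lifting in Lemmas~\ref{iso} and~\ref{su}); so the $\Gamma^*$-orbits of primitive isotropic vectors correspond to the $S_4$-orbits on isotropic elements of $A_T$, namely $\{0\}$, $\{\xi_2+\xi_3+\xi_4+\xi_5\}$ and $\{\sum_{i\ne j}\xi_i:j\ne1\}$ — three orbits, the last two refining $\mathcal I_2$.

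The only non-formal ingredient is Eichler's criterion for the $\tilde\O(T)$-orbits; the rest is bookkeeping with the discriminant form of Lemma~\ref{disc} together with the surjectivity statements of Lemmas~\ref{iso} and~\ref{su}. If one preferred to avoid quoting Eichler, the case $\operatorname{div}(v)=1$ can be handled directly — such a $v$ spans, with a suitable isotropic complement, a unimodular $U\subseteq T$ splitting off orthogonally, whose complement is unique in its genus by \cite[Theorem~1.14.4]{N1} — but the case $\operatorname{div}(v)=2$ genuinely needs an Eichler-type transitivity statement, and that is the step I would expect to require the most care.
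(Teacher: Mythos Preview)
Your proof is correct and follows essentially the same route as the paper: the paper invokes Proposition~4.1.3 of \cite{Sc} (which is the Eichler-type transitivity you quote) to identify $\Gamma$- and $\Gamma^*$-orbits of primitive isotropic vectors with orbits of isotropic elements in $A_T$, and then reads off the answer from Lemmas~\ref{disc} and~\ref{iso}. Your version is more explicit about divisors, representatives, and the $S_4$-orbit decomposition, but the argument is the same.
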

\proof
By Proposition 4.1.3 in \cite{Sc} there is a bijection between orbits of isotropic vectors in $T$ modulo $\Gamma$ ($\Gamma^*$) and isotropic vectors in $A_T$ modulo the induced action of $\Gamma$ ($\Gamma^*$).
By Lemma \ref{iso} the map $\Gamma\rightarrow \O(q_T)$ is surjective and clearly the image of $\Gamma^*$ is given by elements of $\O(q_T)$ fixing $\xi_1$.
Then it follows from Lemma \ref{disc} that there are exactly two  orbits of isotropic vectors in $A_T$ for the action of $\Gamma$ and three for the action induced by $\Gamma^*$.  
\qed

\begin{corollary}
The boundaries of $\overline{\mathcal M}$ and $\overline{\mathcal M}^*$ contain two  
and three zero-dimensional components respectively. 
\end{corollary}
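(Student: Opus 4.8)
The plan is to deduce the statement directly from Lemma~\ref{iso1} together with the general description of the boundary of the Satake--Baily--Borel compactification recalled just above. First I would note that, since $T$ has signature $(2,15)$, every boundary component of $\overline{\mathcal M}$ (resp.\ $\overline{\mathcal M}^*$) is either $0$- or $1$-dimensional, and that the $0$-dimensional ones are in bijection with the rank $1$ primitive isotropic sublattices of $T$ modulo $\Gamma$ (resp.\ modulo $\Gamma^*$). Such a sublattice is of the form $\bbZ v$ for a primitive isotropic vector $v\in T$, and $\bbZ v=\bbZ v'$ precisely when $v'=\pm v$, so counting $0$-dimensional components amounts to counting orbits of primitive isotropic vectors, provided $-v$ always lies in the same orbit as $v$.

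Next I would verify exactly this point: $-1_T$ belongs both to $\Gamma=\O(T)$ and to $\Gamma^*$. Indeed $-1_T$ acts on $A_T$ by $x\mapsto -x$, and since $2\xi_1=r_1\in T$ we have $-\xi_1\equiv \xi_1$ in $A_T$; hence $-1_T$ fixes $\xi_1$ and so lies in $\Gamma^*$. Therefore $v$ and $-v$ are always in the same $\Gamma$-orbit and in the same $\Gamma^*$-orbit, and the assignment $v\mapsto \bbZ v$ induces a bijection between $\Gamma$-orbits (resp.\ $\Gamma^*$-orbits) of primitive isotropic vectors in $T$ and $\Gamma$-orbits (resp.\ $\Gamma^*$-orbits) of rank $1$ primitive isotropic sublattices of $T$.

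Finally I would invoke Lemma~\ref{iso1}, which asserts that there are exactly two $\Gamma$-orbits of primitive isotropic vectors in $T$, namely $\mathcal I_1$ and $\mathcal I_2$, and exactly three $\Gamma^*$-orbits. Combining this with the bijection of the previous paragraph and with the dictionary between isotropic sublattices and boundary components gives two $0$-dimensional boundary components for $\overline{\mathcal M}$ and three for $\overline{\mathcal M}^*$, which is the claim. I do not expect any genuine obstacle in this argument: the only step that requires a small check is the elementary observation that $-1_T\in\Gamma^*$, which is what guarantees that passing from isotropic vectors to the isotropic lines they span neither merges nor splits orbits.
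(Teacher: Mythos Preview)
Your argument is correct and is essentially the same as the paper's: the corollary is treated there as an immediate consequence of Lemma~\ref{iso1} and the stated bijection between $k$-dimensional boundary components and $\Gamma$- (resp.\ $\Gamma^*$-) orbits of rank $k+1$ primitive isotropic sublattices, with no further proof given. Your additional verification that $-1_T\in\Gamma^*$ (via the $2$-elementarity of $A_T$) is a welcome detail the paper leaves implicit, but it does not constitute a different approach.
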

We will denote by $p,q$ the zero-dimensional boundary components of $\overline{\mathcal M}$ corresponding to the orbits $\mathcal I_1, \mathcal I_2$ in Lemma \ref{iso1} respectively and with  $q_1, q_2$ the zero-dimensional boundary components of $\overline{\mathcal M}^*$ corresponding to the orbits of $\Gamma^*$ decomposing $\mathcal I_2$.

\begin{remark}\label{t}
By \cite[Theorem 3.6.2]{N1} there is also an isomorphism
$$T\cong U\oplus U(2)\oplus A_1\oplus D_4\oplus E_8.$$
In the following we will denote by $e,f$ and $e',f'$ the standard bases of $U$ and $U(2)$, by $\beta$ a generator of $A_1$, by $\gamma_1,\dots,\gamma_4$ and $\alpha_1,\dots,\alpha_8$ the standard root bases of $D_4$  and $E_8$.
Note that $e,f\in \mathcal I_1$ and $e',f'\in \mathcal I_2$.
\end{remark}

We now classify one dimensional boundary components in $\overline{\mathcal M}$ by studying $\Gamma$-orbits of primitive isotropic planes in $T$. We will say that such a plane is of \emph{type} $(i,j)$, $i,j=1,2$ if it is generated by a vector in $\mathcal I_i$ and one in $\mathcal I_j$.

Let $\mathcal G_1$ be the genus of $E_8\oplus A_1^{ 5}
$ and let $\mathcal G_2$ be the genus of $E_8\oplus A_1\oplus D_4$.
If $N$ is a lattice in  $\mathcal G_1$, then $T\cong U\oplus U\oplus N$ by \cite[Theorem 3.6.2]{N1}.
By taking two isotropic vectors, each in one copy of $U$, we get an isotropic plane in $T$ of type $(1,1)$.
Similarly, if $N_2\in \mathcal G_2$ then $T\cong U\oplus U(2)\oplus N_2$ and the plane generated by a generator of $U$ and one of $U(2)$  is isotropic of type $(1,2)$.

\begin{lemma}
The isomorphism classes of lattices in $\mathcal G_1$ and $\mathcal G_2$ are given in the following table. 
\end{lemma}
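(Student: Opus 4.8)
\noindent\emph{Plan of proof.} The idea is to combine a mass computation with an explicit enumeration of lattices. Both $\mathcal{G}_1$ and $\mathcal{G}_2$ consist of negative definite $2$-elementary lattices of rank $13$: for a member $N$ one has $A_N\cong\bbF_2^5$ with $\delta(q_N)=1$ in the first genus and $A_N\cong\bbF_2^3$ with $\delta(q_N)=1$ in the second, and since $E_8$ is unimodular the genus symbol of $N$ at every prime is immediate from the symbols of $A_1^5$, $A_1$ and $D_4$. First I would compute, by the Conway--Sloane form of the Minkowski--Siegel mass formula, the mass $m_i=\sum_{N\in\mathcal{G}_i}|\O(N)|^{-1}$ of each genus; with these $2$-adic symbols this is short.

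Next I would list all members. A lattice $N$ in $\mathcal{G}_i$ is recovered from its root sublattice $R$ by adjoining glue, with $[N:R]^2=\det R/\det N$; since $R$ is a root lattice of rank $13$ and $\det R$ is a multiple of $\det N$ by a perfect square, only finitely many root systems $R$ are admissible and they can be written down. For each such $R$ one enumerates the even overlattices $N\supseteq R$ having root sublattice exactly $R$ and the prescribed discriminant form, by going through the isotropic subgroups of the discriminant form of $R$ — in the generic case $R=A_1^{13}$ this amounts to classifying doubly even self-orthogonal binary codes of the relevant length and dimension. One also has to exclude the possibility that $N$ has an orthogonal summand containing no root, which a crude lower bound on the minimum in terms of the determinant rules out. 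An equivalent but perhaps faster route is to run Kneser's $2$-neighbour algorithm starting from $E_8\oplus A_1^5$, respectively $E_8\oplus A_1\oplus D_4$; in either case the computation can be carried out by hand in the spirit of \cite{Sc} or mechanically.

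Finally, for each lattice $N$ on the list I would compute $|\O(N)|$ from the root system $R$ and the glue group — the order of the Weyl group of $R$, the subgroup of diagram automorphisms of $R$ preserving the glue, and the symmetries of the glue itself — and verify that $\sum_N|\O(N)|^{-1}=m_i$. Equality forces the list to be complete, which proves the table. The main obstacle is the bookkeeping in this last step: correctly deciding which overlattices are isometric (so as neither to double count nor to overlook that two glue choices produce the same lattice) and correctly computing each $|\O(N)|$, in particular the isometries of $N$ not induced from $R$. The mass identity is exactly the consistency check that catches mistakes here, so the argument ultimately rests on pairing a correct mass against a correct enumeration.
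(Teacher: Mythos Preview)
Your overall strategy---mass formula plus explicit enumeration, with the mass identity as a completeness check---is sound, but it is a genuinely different route from the paper's. The paper uses neither the mass formula nor Kneser neighbours. It invokes \cite[Proposition~6.1.1]{Sc}: the isomorphism classes in the genus of a rank-$13$ negative definite lattice $L$ are exactly the orthogonal complements of the primitive embeddings, into Niemeier lattices, of the fixed rank-$11$ lattice $R=L^\perp$ computed inside $E_8^3$. Here $R_1=E_7\oplus A_1^4$ for $\mathcal G_1$ and $R_2=E_7\oplus D_4$ for $\mathcal G_2$; since these are root lattices, their primitive embeddings into Niemeier lattices reduce to embeddings into Niemeier root systems, already tabulated by Nishiyama \cite{Ni}. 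The table is then read off directly. Your route is self-contained but computational; the paper's is a short lookup once one accepts the Niemeier classification and Nishiyama's tables, and it also explains why Table~\ref{table} is organised by the Niemeier root system $R$.

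One step of your enumeration does not go through as written: you assert that the root sublattice of every $N\in\mathcal G_1$ has rank $13$. For the entry in row $d$, namely $(A_1^4)^\perp$ in $A_{17}$, this fails. Taking the four orthogonal roots $e_{2i-1}-e_{2i}$ ($1\le i\le 4$) in $A_{17}$, any vector in their orthogonal complement has $a_{2i-1}=a_{2i}$ for $i\le 4$, and the norm condition then forces every $(-2)$-vector to lie in the $A_9$ spanned by $e_i-e_j$ with $i,j\ge 9$; so the root sublattice has rank $9$, not $13$. Your ``rank-$13$ root lattice plus glue'' scheme would therefore miss this class. The mass check would indeed signal the shortfall and Kneser's algorithm would recover the missing lattice, but you should not present the root-lattice enumeration and Kneser as equivalent: the former must allow root sublattices of deficient rank (equivalently, an orthogonal piece with minimum $\ge 4$), and that case has to be handled explicitly.
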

\begin{table}[ht]
\newcommand\T{\rule{0pt}{2.8ex}}
\newcommand\B{\rule[-1.6ex]{0pt}{0pt}}
\begin{tabular}{l|l|l|l }
 & $R$ &  $\mathcal G_1 $ & $ \mathcal G_2 $ \B \\ 
\hline
a& $E_8^3$ & $E_7\oplus D_4\oplus A_1^2$, $D_6^2\oplus A_1$, $E_8\oplus A_1^5$ &  $A_1\oplus E_8\oplus D_4$ \T \B\\  
\hline 
 b& $E_7^2\oplus D_{10}$ & $\overline{E_7\oplus A_1^6}, \overline{D_6\oplus D_4\oplus A_1^3}, D_8\oplus D_4\oplus A_1, 
\overline{D_8\oplus A_1^5}, D_{10}\oplus A_1^3$ & $E_7\oplus D_6, \overline{D_{10}\oplus A_1^3}$  \T \B \\  
\hline
c& $ D_{16}\oplus E_8$& $\overline{D_8\oplus A_1^5}$ &  $D_{12}\oplus A_1$  \T \B \\  
\hline
 d& $A_{17}\oplus E_7$& $(A_1^4)^{\perp}$ in $A_{17}$ &  \T \B \\ 
 
\end{tabular}
\ \\
\ \\
\ \\
\caption{One dimensional boundary components}\label{table}
\end{table}

\proof
The orthogonal complements of $E_8\oplus A_1^{ 5}$ and $E_8\oplus A_1\oplus D_4$ in $E_8^{ 3}$ are isomorphic to $R_1=E_7\oplus A_1^4$  and $R_2=E_7\oplus D_4$ respectively.
By Proposition 6.1.1, \cite{Sc} the isomorphism classes in $\mathcal G_1$ and $\mathcal G_2$ can be obtained by taking the orthogonal complements of primitive embeddings of $R_1$  
and respectively $R_2$ into even negative definite unimodular lattices of rank $24$, i.e. Niemeier lattices. 
These lattices are uniquely determined by their root sublattice $R$, hence they are denoted by $N(R)$ (see \cite{CS}, Chap. 18). 
In order to determine all lattices in the $\mathcal G_i$ we first classify all primitive embeddings of $R_1, R_2$ into $R$ and take their orthogonal complements $R_i^{\perp}$ in $R$.  
Then we take the primitive overlattice $\overline{R_i^{\perp}}$ of 
$R_i^{\perp}$ in $N(R)$ which contains $R_i^{\perp}$ as a subgroup of index at most $2$.  Here we have used the classification of embeddings between root lattices due to Nishiyama \cite{Ni}.  This gives isomorphism classes $\overline{R_i^{\perp}}$ in $\mathcal G_i$.
In Table \ref{table} all root lattices $R$ appear such that $R_i$ can be embedded in $N(R)$ and the corresponding lattices in 
$\mathcal G_1$ and $\mathcal G_2$.  If $R_i^{\perp}$ is primitive
in $N(R)$, then we omit the overline.

\qed

\begin{theorem}\label{1bou}
The boundary of $\overline{\mathcal M}$ contains 
$14$ one dimensional components $B_1,\dots,B_{14}$ where the closure of $B_i$, $i=1,\dots,10$ contains only $p$ and the closure of $B_j$, $j=11,\dots, 14$ contains both $p$ and $q$.
\end{theorem}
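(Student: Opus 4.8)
The plan is to enumerate the $\Gamma$-orbits of primitive isotropic planes in $T$ by combining the classification of isotropic \emph{vectors} from Lemma \ref{iso1} with the lattice-genus computations recorded in Table \ref{table}. First I would split the count according to the type $(i,j)$ of the plane. A plane of type $(1,1)$ is determined, up to $\Gamma$, by an isotropic vector $v\in\mathcal I_1$ together with a second primitive isotropic vector $w$ with $(v,w)=1$ and $w/\la v,w\ra^{\perp}\dots$; the standard reduction (as in \cite{Sc}, §6) shows that after fixing $v$, the stabilizer acts on the ``transverse'' data so that type $(1,1)$ planes correspond bijectively to the isomorphism classes of lattices in the genus $\mathcal G_1$ of $E_8\oplus A_1^5$. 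Likewise type $(1,2)$ planes correspond to classes in the genus $\mathcal G_2$ of $E_8\oplus A_1\oplus D_4$. A type $(2,2)$ plane would have to contain two vectors of $\mathcal I_2$ spanning a plane on which the form is divisible by $2$; I would check directly (using $(v,T)=2\bbZ$ for $v\in\mathcal I_2$ and the structure of $A_T$ from Lemma \ref{disc}) that no such primitive plane exists, so types $(1,1)$ and $(1,2)$ exhaust everything.

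Next I would read off the counts from Table \ref{table}: the $\mathcal G_1$ column lists, across rows (a)--(d), the lattices $E_7\oplus D_4\oplus A_1^2$, $D_6^2\oplus A_1$, $E_8\oplus A_1^5$, $\overline{E_7\oplus A_1^6}$, $\overline{D_6\oplus D_4\oplus A_1^3}$, $D_8\oplus D_4\oplus A_1$, $\overline{D_8\oplus A_1^5}$, $D_{10}\oplus A_1^3$, the complement $(A_1^4)^\perp$ in $A_{17}$, with the two occurrences of $\overline{D_8\oplus A_1^5}$ (rows b and c) being isomorphic — giving $10$ isomorphism classes, hence $10$ planes of type $(1,1)$. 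The $\mathcal G_2$ column lists $A_1\oplus E_8\oplus D_4$, $E_7\oplus D_6$, $\overline{D_{10}\oplus A_1^3}$, $D_{12}\oplus A_1$ — giving $4$ isomorphism classes, hence $4$ planes of type $(1,2)$. This yields $B_1,\dots,B_{14}$ with the first ten of type $(1,1)$ and the last four of type $(1,2)$.

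Finally, for the incidence statement about $0$-dimensional components in the closure: a $1$-dimensional boundary component $B$ associated to an isotropic plane $\Pi$ has in its closure exactly the $0$-dimensional components associated to the $\Gamma$-orbits of primitive isotropic \emph{lines} contained in $\Pi$. For a type $(1,1)$ plane, every primitive isotropic vector in $\Pi$ lies in $\mathcal I_1$ (since, writing $\Pi=\bbZ e\oplus \bbZ f$ with $e,f\in\mathcal I_1$ and $(e,f)=1$, an arbitrary primitive vector $ae+bf$ pairs with $T$ to generate $\gcd$-type ideal $\bbZ$), so the closure meets only $p$. For a type $(1,2)$ plane, one basis vector lies in $\mathcal I_1$ and one in $\mathcal I_2$; here a primitive vector $ae'+bf$ with $e'\in\mathcal I_2$, $f\in\mathcal I_1$, $(e',f)=2$ (cf. the model $T\cong U\oplus U(2)\oplus N_2$) pairs with $T$ to give $\bbZ$ if $b\neq 0$ and $2\bbZ$ if $b=0$, so the plane contains lines in both orbits $\mathcal I_1$ and $\mathcal I_2$ and the closure meets both $p$ and $q$. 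Hence $B_i$ ($i\le 10$) contains only $p$ in its closure and $B_j$ ($j=11,\dots,14$) contains both $p$ and $q$.

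The main obstacle I expect is the bookkeeping in the previous paragraph's lattice classifications — concretely, verifying that the primitive embeddings of $R_1=E_7\oplus A_1^4$ and $R_2=E_7\oplus D_4$ into the Niemeier lattices are correctly enumerated via Nishiyama's tables, that the indicated overlattices $\overline{R_i^\perp}$ of index $\le 2$ are the right ones, and in particular that the two lattices labelled $\overline{D_8\oplus A_1^5}$ in rows (b) and (c) really coincide (so that the count is $10$, not $11$). Once Table \ref{table} is accepted as stated (which the excerpt permits), the orbit count and the incidence analysis above are routine.
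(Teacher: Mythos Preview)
Your treatment of the type $(1,1)$ planes agrees with the paper's: both invoke Scattone's reduction (the paper cites \cite[Lemma~5.2]{Sc}) to put $\Gamma$-orbits of planes consisting entirely of $\mathcal I_1$-vectors in bijection with isomorphism classes in $\mathcal G_1$, giving $10$.

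For planes containing an $\mathcal I_2$-vector you take a genuinely different route, and here there is a gap. You assert that ``likewise type $(1,2)$ planes correspond to classes in the genus $\mathcal G_2$,'' but this is precisely the step the paper flags as ``more subtle'' and does \emph{not} prove by a genus argument. Scattone's lemma works for type $(1,1)$ because any such plane embeds primitively in a copy of $U\oplus U$ inside $T$; you have not shown the analogue for $U\oplus U(2)$, nor that two planes with isomorphic $E^\perp/E\in\mathcal G_2$ are necessarily $\Gamma$-equivalent. Instead of attempting this, the paper fixes $v\in\mathcal I_2$, passes to the hyperbolic lattice $M_v=v^\perp/\mathbb Z v\cong U\oplus E_8\oplus D_4\oplus A_1$, and runs Vinberg's algorithm to produce its full Coxeter diagram; the $\Gamma$-orbits of planes through $v$ are then read off as the four types of maximal parabolic subdiagrams of rank $13$ ($\tilde E_8\tilde D_4\tilde A_1$, $\tilde D_{12}\tilde A_1$, $\tilde E_7\tilde D_6$, $\tilde D_{10}\tilde A_1^3$), after checking that the diagram's symmetry group $\mathbb Z_2\times S_3$ is realized by isometries in $\Gamma$. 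That this count agrees with $|\mathcal G_2|=4$ is consistent with your claim, but the bijection itself is not established in your proposal, and Table~\ref{table} as stated only supplies the \emph{injection} from $\mathcal G_2$ into the set of orbits.

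Your incidence analysis also contains a slip: for an \emph{isotropic} plane $\Pi$ one has $(u,w)=0$ for all $u,w\in\Pi$, so your conditions ``$(e,f)=1$'' and ``$(e',f)=2$'' cannot refer to vectors of $\Pi$; you are conflating $\Pi$ with a hyperbolic sublattice of $T$ containing one of its isotropic lines. The correct argument runs via divisors: if $v_1\in\mathcal I_1$, $v_2\in\mathcal I_2$ span $\Pi$ and $t\in T$ satisfies $(v_1,t)=1$, then $(av_1+bv_2,t)=a+b(v_2,t)$ has the parity of $a$, so a primitive $av_1+bv_2$ lies in $\mathcal I_2$ iff $a$ is even, and $\Pi$ meets both orbits. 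The paper's ``direct analysis'' at the end of its proof is essentially this observation applied to the four planes produced by the Vinberg computation.
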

\proof
As remarked before, to the lattices in $\mathcal G_1$ we can associate  isotropic planes of type $(1,1)$ in $T$ which are not $\Gamma$-equivalent. Conversely, by Lemma 5.2 in \cite{Sc}, any isotropic plane $E$ of type $(1,1)$ can be embedded in $U\oplus U$ and $T\cong U\oplus U\oplus E^{\perp}/E$ where $E^{\perp}/E\in \mathcal G_1$. Hence, boundary components containing only $p$ are in one-to-one correspondence with lattices in $\mathcal G_1$. 

The proof is more subtle for isotropic planes of type $(1,2)$. Note that if $v\in T$ is a primitive isotropic vector of type $2$ and $E$ is an isotropic plane containing $v$, then $E$ determines a primitive vector in $M_v=v^{\perp}/\mathbb Z v$. Hence, isotropic planes of type $(1,2)$ correspond to orbits of isotropic vectors in $M_v$.
In this case $M_v\cong U\oplus E_8\oplus D_4\oplus A_1$ and orbits of isotropic vectors can be determined by Vinberg's algorithm (see \S 1.4 \cite{V} or \S 4.3 \cite{St}).

By \cite[Theorem 0.2.3]{N2}, the Weyl group $W(M_v)$ has finite index in $O(M_v)$. This implies that the algorithm will finish in a finite number of steps.
To start the algorithm we fix the vector $\bar x=e+f$. Then at each step we have to choose roots $x\in M_v$ such that the \emph{height} $$h=\frac{(x,\bar x)}{\sqrt{-x^2}}$$
is minimal and $(x_i,x_j)\geq 0$ for $j=1,\dots,i-1$. In our case we get:\\

\noindent $i)$ $(x,\bar x)=0$:\hspace{0.4cm}  $u:=e-f,\ \alpha_1,\dots,\alpha_8, \gamma_1,\dots, \gamma_4,\beta.$\\
$ ii)$ $(x,\bar x)=1 $:\hspace{0.4cm}  $\alpha:=f+\bar\alpha_8,\ \gamma:=f+\bar\gamma_1,\ \beta':=f-\beta.$\\
$iii)$ $(x,\bar x)=4 $:\hspace{0.4cm}  $\delta_i:=2(e+f)-\beta+\bar \alpha_1+\bar\gamma_i,\ \ j=2,3,4.$\\
$iv)$ $(x,\bar x)=12 $: \hspace{0.4cm}  $\alpha ':=6(e+f)-3\beta+2\bar\alpha_4+\bar\gamma_2+\bar\gamma_ 3+\bar\gamma_4$\\

\noindent where $\bar \alpha_1,\dots,\bar \alpha_8$ and $\bar \gamma_1,\dots,\bar\gamma_4$ are the dual bases of $E_8$ and $D_4$.
We now draw the Dyinkin diagram associated to these roots. Let $g_{ij}=(e_i,e_j)/\sqrt{e_i^2e_j^2}.$
Then two vertices $i,j$ corresponding to vectors $e_i, e_j$ are connected by
$$\xymatrix @R=0.05in@C=0.3in{
\bullet\ar@{}[r]&\bullet&\ \mbox{ if }& g_{ij}=0,\\
\bullet\ar@{-}[r]&\bullet&\ \mbox{ if }& g_{ij}=1/2,\\
\bullet\ar@{--}[r]&\bullet&\ \mbox{ if }& g_{ij}=1,\\
\bullet\ar@{-}[r]|\|&\bullet&\ \mbox{ if }& g_{ij}>1
.}$$
The diagram in our case is given in Figure 1 (see also Figure 5, \cite{Kon}).
Note that the symmetry group of the diagram is 
$\mathbb Z_2\times S_3$
and it can be easily seen that all symmetries can be realized by isometries in $\Gamma$. The maximal parabolic subdiagrams of rank $13$ are of four types :
\begin{figure}
$\xymatrix @R=0.3in@C=0.3in{
& & & &\bullet\ar@{--}[rr]^<{\gamma_2} & &\bullet\ar@{-}[dr]\ar@{}[r]^<{\delta_2}& & & & \\
& & \bullet\ar@{-}[r]\ar@{-}[dd]^<{\gamma}& \bullet\ar@{-}[ur]\ar@{-}[r]^<{\gamma_1}\ar@{-}[dr]& \bullet\ar@{--}[rr]^<{\gamma_3}  & &\bullet\ar@{-}[r]^<{\delta_3}&\bullet\ar@{-}[r]^<{\alpha_1}&\bullet\ar@{-}[dd]^<{\alpha_2}\\
& & & & \bullet\ar@{--}[rr]^<{\gamma_4} & &\bullet\ar@{-}[ur]^<{\delta_4}& & &\\
& & \bullet\ar@{-}[d]\ar@{-}[r]^<{u}&\bullet\ar@{--}[r]^<{\beta'}&\bullet\ar@{--}[uuurr] \ar@{--}[uurr] \ar@{--}[urr]^<{\beta}\ar@{-}[rr]|\| & &\bullet\ar@{--}[uuull]\ar@{--}[uull]\ar@{--}[ull]\ar@{--}[r]^<{\alpha'} &\bullet\ar@{-}[r]^<{\alpha_4} &\bullet\ar@{-}[d]^<{\alpha_3}\\
& & \bullet\ar@{-}[r]^<{\alpha}& \bullet\ar@{-}[rr]^<{\alpha_8}& &\bullet \ar@{-}[rr]^<{\alpha_7}& &\bullet \ar@{-}[r]^<{\alpha_6}& \bullet\ar@{}[r]^<{\alpha_5} &
}$\\
\ \\
 \caption{The Dynkin diagram of $W(M_v)$} \hspace{3cm}
\end{figure}

$$\tilde E_8\oplus \tilde D_4\oplus \tilde A_1=\langle \alpha_i,\alpha,\beta',\beta,\gamma,\gamma_j\rangle\ \ \ i=1,\dots,8;\ j=1,\dots,4$$
$$\tilde D_{12}\oplus \tilde A_1=\langle  \alpha_i, \alpha, u, \gamma, \gamma_j, \beta, \delta_4\rangle\ \ \ i=2,\dots,8;\ j= 2,3 $$
$$\tilde E_7\oplus \tilde D_6=\langle \alpha_i, \delta_2, \alpha,u, \beta', \gamma, \gamma_j\rangle\ \ \ i=1,\dots,7;\ j=3,4.$$
$$\tilde D_{10}\oplus \tilde A_1^3=\langle \alpha_i,\alpha,u,\gamma, \beta', \gamma_j, \delta_j  \rangle\ \ \ i=2,\dots,8;\ j=2,3,4.$$
\noindent Note that each type is an orbit for the action of $\Gamma$.  These subdiagrams correspond to non-equivalent isotropic vectors in $M_v$. Hence, we get $4$ isotropic planes in $T$ containing a vector in $\mathcal I_2$ and a direct analysis shows that all of them are of type $(1,2)$. \qed\\

It follows from  the proof of Theorem \ref{1bou} that the boundary components of $\overline{\mathcal M}$ are in one-to-one correspondence with the lattices in $\mathcal G_1$ and $\mathcal G_2$. These lattices
 appear in connection to degenerations of K3 surfaces as explained for example in \cite{Sc}. 
 This allows to compare the SBB compactification with more geometrically meaningful compactifications,  as the ones obtained by means of geometric invariant theory.

In case of K3 surfaces with a degree two polarization this is well-understood (\cite{Sh}, \cite{F}, \cite{Lo2}). 
 Table \ref{table2} describes the correspondence between type II boundary components of the GIT compactification of plane sextics and one dimensional boundary components of the Baily-Borel compactification for degree two K3 surfaces.
\begin{table}[ht]
\newcommand\T{\rule{0pt}{2.8ex}}
\newcommand\B{\rule[-1.6ex]{0pt}{0pt}}
\begin{tabular}{c c|c}
& GIT & SBB\\
\hline
IIa:& $(x_0x_2+a_1x_1^2)(x_0x_2+a_2x_1^2)(x_0x_2+a_3x_1^2)=0$& $E_8\oplus E_8\oplus A_1$ \T \B \\
\hline
IIb:& $x_2^2f_4(x_0,x_1)=0.$& $E_7\oplus D_{10}$ \T \B\\
\hline
IIc:& $(x_0x_2+x_1^2)^2f_2(x_0,x_1,x_2)=0.$& $D_{16}\oplus A_1$ \T \B\\
\hline
IId:& $f_3(x_0,x_1,x_2)^2=0.$ & $A_{17}$ \T \\
\end{tabular}
\ \\
\ \\
\ \\
\caption{GIT and SBB of plane sextics}\label{table2}
\end{table}
The lattice appearing in the SBB column is $E^{\perp}/E$, where $E$ is the isotropic lattice associated to the boundary component. 
 \begin{remark} In the proof of Theorem \ref{1bou} we showed that boundary components of $\overline{\mathcal M}$ containing only $p$ in their closure correspond to primitive embeddings of the lattice $E_7\oplus A_1^{4}$ into Neimeier lattices.
Equivalently, they correspond to primitive embeddings of the lattice $A_1^{4}$ in the root lattices $E_8\oplus E_8$, $E_7\oplus D_{10}$, $D_{16}$, $A_{17}$. 
Note that a double cover branched over a node has an $A_1$ singularity hence, embedding $A_1^4$ in the root lattices is equivalent to choose a distribution of the $4$ nodes on the corresponding configurations in Table \ref{table2} (where more than one node can ``collapse'' to the same singular point of the configuration).

For example, let $q_1,q_2$ be the two singular points in the IIa configuration. We can either embed one node in $q_1$ and $3$ nodes in $q_2$ (this gives the root lattice $E_7\oplus D_1\oplus A_1$), two nodes in $q_1$ and two in $q_2$ (this gives the root lattice $D^2_6 \oplus A_1$) or $4$ nodes in $q_1$ (this gives the root lattice $E_8\oplus A_1^5$).

Similarly, boundary components containing both $p$ and $q$ in their closure correspond to embeddings of the lattice $D_{4}$ into the previous root lattices. Note that a double cover branched over a triple point has a $D_4$ singularity.

In fact we conjecture that a one dimensional boundary component $B$ of $\overline{\mathcal M}$ of type a, b, c or d (see Table \ref{table}) corresponds to 
a boundary component of type IIa, IIb, IIc or IId respectively   with 
\begin{enumerate}[$\bullet$]
\item $4$ marked nodes (eventually collapsing) if $q\not\in B$ 
\item a marked triple point if $q\in B$.
\end{enumerate}

 Note that the configuration IId has no triple points, in fact there is no one-dimensional boundary component of type $d$ containing $q$ in its closure.
\end{remark}

\begin{remark}
 By corollaries \ref{quintic} and \ref{trig} the moduli space $\mathcal M$ contains two divisors which are birational to $\bbP^2$ and $\bbP^1$ fibrations over the locus of plane quintics and trigonal curves respectively. This suggests that we need to blow-up the moduli space of curves of genus six in order to extend the period map to these loci.

Bi-elliptic and hyperelliptic curves of genus six are mapped to one dimensional boundary components of $\overline{\mathcal M}$.
In fact, the configuration IIc is a plane model for hyperelliptic curves and case IId is the plane model of a bi-elliptic curve of genus six (see \S 1).

\end{remark}

\end{document}